\documentclass[a4paper,12pt]{article}
\usepackage{graphicx} 

\usepackage{latexsym,amsmath,enumerate,amssymb,amsbsy,amsthm,lscape, hyperref} 
\usepackage {graphicx}
\usepackage{float,color,tikz}
\usepackage{subcaption,   comment}

\usepackage{algorithm}
\usepackage{algpseudocode}
\usepackage{layout}

\usepackage{mathtools}
\newcommand{\rank}{{\rm rank}}

\theoremstyle{plain}
\newtheorem{theorem}{Theorem}[section]
\newtheorem{proposition}[theorem]{Proposition}
\newtheorem{lemma}[theorem]{Lemma}
\newtheorem{corollary}[theorem]{Corollary}

\theoremstyle{definition}

\theoremstyle{remark}
\newtheorem{remark}{Remark}[section]
\newtheorem{example}{Example}[section]

\setlength{\topmargin}{-1.5cm}
\setlength{\oddsidemargin}{-0.0cm}
\setlength{\evensidemargin}{-0.0cm}
\setlength{\textwidth}{16.5cm}%
\setlength{\textheight}{25cm}

\title{Explicit Determinants of Homogeneous Polynomial Evaluation Matrices and Applications}
\author{Somphong Jitman  and Wannarut Rungrottheera  \footnote{S. Jitman (sjitman@gmail.com) and W. Rungrottheera  (rungrottheera\_w@su.ac.th) are  with the Department of Mathematics, Faculty of Science, Silpakorn University, Nakhon Pathom 73000, THAILAND}}
\date{January 25, 2026}

\begin{document}

\maketitle

\begin{abstract}
    In this work, the determinants of matrices constructed by evaluating homogeneous bivariate polynomials at pairs of vectors are investigated. For a polynomial $p(x,y)=\sum\limits_{i=0}^k \alpha_i x^{k-i}y^i$, an  explicit factorization of the determinant of the associated $n\times n$ evaluation matrix $A_{\mathbf{a},\mathbf{b}}(p(x,y))=\bigl[p(a_r,b_s)\bigr]_{r,s=1}^n$  is presented for all $n \ge k+1$ and for all pairs of  vectors $\mathbf a=(a_1,\dots,a_n)$ and $\mathbf b=(b_1,\dots,b_n)$ of length $n$. In particular, it is proved that $\det (A_{\mathbf{a},\mathbf{b}}(p(x,y)))=0$  when $n \ge k+2$, while in the borderline case $n=k+1$ a  closed formula involving Vandermonde determinants is derived in the vector  sets and the coefficients of $p(x,y)$. Several well-known determinants, including those arising from $(x+y)^k$ and classical quotient forms $\frac{a^k-b^k}{a-b}$ and $\frac{a^k+b^k}{a+b}$, emerge as special cases. We also provide a  discussion for $n \le k$, connecting the problem to symmetric functions and generalized Vandermonde determinants.  Finally, applications such matrices and determinants are provided, including  an explicit formula and equivariance law under linear changes of variables for the sum-form \(p(x,y)=f(x+y)\), and  a non-vanishing bound over finite fields via Schwartz-Zippel lemma.

    \noindent {\bf Keywords:}{ Generalized Vandermonde matrices,  polynomial  evaluation matrices, Cauchy-Binet formula, homogeneous polynomials,   determinants, finite fields}
    
    \noindent {\bf MSC2020:}{15A15, 11C08,  	  15B99}
\end{abstract}

 \section{Introduction}

 Polynomial evaluation matrices arise naturally in many areas of mathematics, including
 interpolation theory, approximation theory, coding theory, and structured linear algebra (see, e.g., \cite{BjoPer1970,For2010,MacSlo1977}). 
 Given a polynomial $p(x,y)$ and two sets of evaluation points
 $\mathbf a=(a_1,\dots,a_n)$ and $\mathbf b=(b_1,\dots,b_n)$, one may form the matrix
 \[
 A_{\mathbf a,\mathbf b}(p(x,y))
 =
 \bigl[p(a_r,b_s)\bigr]_{r,s=1}^n,
 \]
 whose algebraic and spectral properties reflect both the structure of the polynomial
 and the geometry of the evaluation points.

 Classical examples include Vandermonde matrices and their variations, whose determinants
 encode polynomial interpolation and root separation:
 \[
 V(x_1,\dots,x_n)=
 \begin{bmatrix}
 	1&1&\cdots&1\\
 	x_1&x_2&\cdots&x_n\\
 	x_1^2&x_2^2&\cdots&x_n^2\\
 	\vdots&\vdots&\ddots&\vdots\\
 	x_1^{n-1}&x_2^{n-1}&\cdots&x_n^{n-1}
 \end{bmatrix}
 \quad \text{and }
 \det V(x_1,\dots,x_n)=\prod\limits_{1\le i<j\le n}(x_j-x_i).
 \]
 More recently, matrices arising from evaluating polynomials of the form $f(x+y)$ or
 $f(xy)$ have appeared in diverse contexts, ranging from additive convolution operators
 to coding theory and random matrix models.
 In particular, determinant formulas for matrices of the form
 $\bigl[f(a_r+b_s)\bigr]$ have been obtained in special cases, often relying on
 ad-hoc arguments or strong structural assumptions on $f$ and the evaluation sets.    In \cite{YS2012}, a family of polynomial evaluation matrices   of the form  $[(a_i+b_j)^k]_{i,j}$ was analyzed , where the determinant was computed via a factorization through Vandermonde matrices.  In particular, for the $n\times n$  matrix
 $C=[(a_i+b_j)^k]_{i,j}$, it  has been been shown that 
 \[
 \det(C)=(-1)^{\binom{k+1}{2}}  \left(\prod\limits_{i=0}^{k}\binom{k}{i}\right) 
 \left(\prod\limits_{1\le i<j\le n}(a_j-a_i)\right)
 \left(\prod\limits_{1\le i<j\le n}(b_j-b_i)\right)
 \]
 when $n=k+1$, whereas $\det(C)=0$ for all $n\ge k+2$. Further extensions and related classes of polynomial evaluation matrices
 were discussed in \cite{PS2022}.
 However, existing results are typically subject to one or more limitations:
 They are restricted to specific polynomial forms,
 they address only the   case $n=k+1$,
 or they do not provide a unified explanation for   determinant 
 when the matrix size exceeds the polynomial degree.

 The aim of this paper is to develop a systematic and unified framework for analyzing determinants of polynomial evaluation matrices arising from homogeneous bivariate polynomials.
 Homogeneity is a crucial structural assumption: it confines the polynomial to a finite-dimensional coefficient space and enables the associated evaluation matrix to admit a factorization through structured Vandermonde-type matrices.
 This factorization serves as the fundamental mechanism driving all of our main results. Rather than focusing on isolated examples, we treat the general homogeneous polynomial
 \[
 p(x,y)=\sum_{i=0}^k \alpha_i x^{k-i}y^i
 \]
 and analyze the corresponding evaluation matrix for all values of $n$ relative to $k$.
 This perspective clarifies when determinants  are identically zero,
 when nontrivial closed forms exist,
 and how these phenomena depend on the support of the coefficient vector $(\alpha_i)$. Let $\mathbb{C}[x,y]$ denote the ring of polynomials in two variables over $\mathbb{C}$. For $k\in\mathbb{N}\cup\{0\}$ set
 \[
 \widehat{\mathbb{C}}_k[x,y]
 :=\Bigl\{ p(x,y)=\sum\limits_{i=0}^{k}\alpha_i x^{k-i}y^{i} \mid  \alpha_i\in\mathbb{C}\Bigr\},
 \]
 the vector space of homogeneous polynomials of total degree $k$.
 We also write
 \[
 \widehat{\mathbb{C}}[x,y]:=\bigcup_{k\ge 0}\widehat{\mathbb{C}}_k[x,y].
 \]
 Given vectors $\boldsymbol a=(a_1,\dots,a_n)$ and $\boldsymbol b=(b_1,\dots,b_n)$ in $\mathbb{C}^n$ and a polynomial $p(x,y)\in  \widehat{\mathbb{C}}[x,y]$,  we focus on the corresponding  \emph{polynomial evaluation matrix} \[
 A_{\boldsymbol a,\boldsymbol b}(p(x,y)):=\left[p(a_r,b_s)\right]_{1\le r,s\le n}
 \]
 and its determinant. The  study of  the determinant of the $n\times n$ matrices
 $C=[(a_i+b_j)^k]_{i,j}$   in  \cite{PS2022} and \cite{YS2012}  can be viewed as  evaluation matrices in the special case, where  the polynomial  $p(x,y) = (x+y)^k$. 
 The concept of polynomial evaluation matrices  can be illustrated in the following example.
 \begin{example} 
 	Let $\mathbf a=(a_1,a_2,a_3,a_4)$ and $\mathbf b=(b_1,b_2,b_3,b_4)$ be vectors in $\mathbb{C}^4$.  
 	Let 
 	$
 	p(x,y) = (x+y)^3 = x^3 + 3x^2y + 3xy^2 + y^3 \in \widehat{\mathbb{C}}[x,y]$. Then 
 	\[
 	A_{\mathbf a,\mathbf b}(p(x,y))=
 	\begin{bmatrix}
 		(a_1+b_1)^3 & (a_1+b_2)^3 & (a_1+b_3)^3 & (a_1+b_4)^3\\
 		(a_2+b_1)^3 & (a_2+b_2)^3 & (a_2+b_3)^3 & (a_2+b_4)^3\\
 		(a_3+b_1)^3 & (a_3+b_2)^3 & (a_3+b_3)^3 & (a_3+b_4)^3\\
 		(a_4+b_1)^3 & (a_4+b_2)^3 & (a_4+b_3)^3 & (a_4+b_4)^3
 	\end{bmatrix}.
 	\]
 	Similarly, for
 	$
 	q(x,y)=x^3+xy^2+y^3 \in \widehat{\mathbb{C}}[x,y]
 	$, we have 
 	
 	\[
 	A_{\mathbf a,\mathbf b}(q(x,y))
 	=
 	\begin{bmatrix}
 		a_1^3 + a_1 b_1^2 + b_1^3 & a_1^3 + a_1 b_2^2 + b_2^3 & a_1^3 + a_1 b_3^2 + b_3^3 & a_1^3 + a_1 b_4^2 + b_4^3\\[2mm]
 		a_2^3 + a_2 b_1^2 + b_1^3 & a_2^3 + a_2 b_2^2 + b_2^3 & a_2^3 + a_2 b_3^2 + b_3^3 & a_2^3 + a_2 b_4^2 + b_4^3\\[2mm]
 		a_3^3 + a_3 b_1^2 + b_1^3 & a_3^3 + a_3 b_2^2 + b_2^3 & a_3^3 + a_3 b_3^2 + b_3^3 & a_3^3 + a_3 b_4^2 + b_4^3\\[2mm]
 		a_4^3 + a_4 b_1^2 + b_1^3 & a_4^3 + a_4 b_2^2 + b_2^3 & a_4^3 + a_4 b_3^2 + b_3^3 & a_4^3 + a_4 b_4^2 + b_4^3
 	\end{bmatrix}.
 	\]
 \end{example}

 In this paper, a systematic and unified study of polynomial evaluation matrices
 associated with homogeneous bivariate polynomials is undertaken, encompassing
 and extending classical Vandermonde and sum-form constructions.
 A general factorization of the evaluation matrix $A_{\mathbf a,\mathbf b}(p(x,y))$
 into structured Vandermonde-type matrices and a coefficient-dependent core matrix
 is established for all homogeneous polynomials of fixed degree~$k$.
 This factorization allows us to prove that
 the determinant is identically zero  whenever $n\ge k+2$.
 In the borderline square case $n=k+1$, explicit determinant formulas are obtained,
 including a closed-form expression for sum-form polynomials of the type
 $p(x,y)=f(x+y)$.
 Equivariance properties under linear changes of variables are also analyzed.
 Further applications are developed over finite fields, where probabilistic
 nonvanishing bounds are derived via the Schwartz--Zippel lemma. A  discussion for $n \le k$, connecting the problem to symmetric functions and generalized Vandermonde determinants is given as well.   The paper is organized as follows:
 In Section \ref{sec2},     the basic factorization of evaluation matrices
 is presented together with   explicit determinant formulas in the square case $n=k+1$ and the
 rectangular case $n \le k$.
 In Section \ref{sec3},   various applications of such matrices and determinants are discussed, including the sum-form $p(x,y)=f(x+y)$,  equivariance under linear
 changes of variables, and behavior over finite fields. 
 Finally, the summary is given in   Section \ref{sec4}.

 \section{Determinants of Polynomial Evaluation Matrices}\label{sec2}

 In this section, a factorization of the polynomial evaluation matrix
 $A_{\mathbf{a},\mathbf{b}}(p(x,y))$ through Vandermonde-type matrices is established.
 This factorization reveals  the behavior of the matrix according
 to the size parameter $n$ relative to the degree $k$: the cases $n<k+1$, $n=k+1$,
 and $n\ge k+2$ exhibit fundamentally different rank  and determinant properties of matrices.
 As immediate consequences, explicit rank bounds are obtained and closed-form
 determinant formulas are derived in the square case $n=k+1$, while several
 special configurations are examined in greater detail. When $n\ge k+2$, the matrix $A_{\mathbf a,\mathbf b}(p(x,y))$ necessarily has rank at most $k+1$ and is therefore singular.
 For the general regime $n< k+1$, the Cauchy--Binet formula together with
 generalized Vandermonde identities is applied to express
 $\det\bigl(A_{\mathbf{a},\mathbf{b}}(p(x,y))\bigr)$ in a structured and computable form.

 \subsection{Factorization of Polynomial Evaluation Matrices}
 Fix $k\ge 0$ and $p(x,y)=\sum\limits_{i=0}^{k}\alpha_i x^{k-i}y^i\in \widehat{\mathbb{C}}_k[x,y]$. Given vectors $\boldsymbol a=(a_1,\dots,a_n)$ and $\boldsymbol b=(b_1,\dots,b_n)$ in $\mathbb{C}^n$, define the $(n\times(k+1))$ Vandermonde-type matrices
 \[
 V_{\mathbf a}^{(k)}:=\left[a_r^{ k-j}\right]_{1\le r\le n,\ 0\le j\le k}
 =\begin{bmatrix}
 	a_1^{k} & a_1^{k-1} & \cdots & a_1^{0}\\
 	a_2^{k} & a_2^{k-1} & \cdots & a_2^{0}\\
 	\vdots  & \vdots    & \ddots & \vdots\\
 	a_n^{k} & a_n^{k-1} & \cdots & a_n^{0}
 \end{bmatrix},\]
 and \[
 W_{\mathbf b}^{(k)}:=\left[b_s^{ j}\right]_{1\le s\le n,\ 0\le j\le k}
 =\begin{bmatrix}
 	b_1^{0} & b_1^{1} & \cdots & b_1^{k}\\
 	b_2^{0} & b_2^{1} & \cdots & b_2^{k}\\
 	\vdots  & \vdots  & \ddots & \vdots\\
 	b_n^{0} & b_n^{1} & \cdots & b_n^{k}
 \end{bmatrix}.
 \]
 By expanding $p(a_r,b_s)$ we obtain the matrix identity
 \begin{equation}\label{eq:factor}
 	A_{\boldsymbol a,\boldsymbol b}(p(x,y))
 	=V_{\boldsymbol a}^{(k)} D_\alpha \left(W_{\boldsymbol b}^{(k)}\right)^{ T},
 \end{equation}
 where $D_\alpha=\mathrm{diag}(\alpha_0,\alpha_1,\dots,\alpha_k)$ is given from the coefficients of $p(x,y)$.
 \begin{example} 
 	For $k=3$ and $n=3$,  let   $p(x,y)=(x+y)^3=x^3 + 3x^2y + 3xy^2 + y^3$   and let   $\boldsymbol a=(a_1,a_2,a_3)$ and $\boldsymbol b=(b_1,b_2,b_3)$ in $\mathbb{C}^3$. Then  $D_\alpha=\mathrm{diag}(1,3,3,1)$, 
 	\[
 	V_{\mathbf a}^{(3)}=
 	\begin{bmatrix}
 		a_1^{3} & a_1^{2} & a_1^{1} & a_1^{0}\\
 		a_2^{3} & a_2^{2} & a_2^{1} & a_2^{0}\\
 		a_3^{3} & a_3^{2} & a_3^{1} & a_3^{0} 
 	\end{bmatrix}, \text{~ and ~}
 	W_{\mathbf b}^{(3)}=
 	\begin{bmatrix}
 		b_1^{0} & b_1^{1} & b_1^{2} & b_1^{3}\\
 		b_2^{0} & b_2^{1} & b_2^{2} & b_2^{3}\\
 		b_3^{0} & b_3^{1} & b_3^{2} & b_3^{3} 
 	\end{bmatrix}.
 	\]
 	Hence, 
 	\[
 	A_{\mathbf a,\mathbf b}(p(x,y))=V_{\mathbf a}^{(3)} 
 	\mathrm{diag}(1,3,3,1) \left(W_{\mathbf b}^{(3)}\right)^T.
 	\]
 	
 \end{example}

 \begin{proposition} \label{prop:rank}
 	Let $k\geq 0$   and  $n\geq 1$ be integers and let $p(x,y)\in \widehat{\mathbb{C}}_k[x,y]$.  Then 
 	\[
 	\rank( A_{\boldsymbol a,\boldsymbol b}(p(x,y))) \le k+1.
 	\]
 	In particular, if $n\ge k+2$, then $\det (A_{\boldsymbol a,\boldsymbol b}(p(x,y)))=0$.
 \end{proposition}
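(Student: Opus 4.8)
The plan is to read off the result directly from the factorization \eqref{eq:factor}, which expresses $A_{\boldsymbol a,\boldsymbol b}(p(x,y))$ as a product of three matrices $V_{\boldsymbol a}^{(k)} \, D_\alpha \, \bigl(W_{\boldsymbol b}^{(k)}\bigr)^T$. The key observation is that the two outer factors $V_{\boldsymbol a}^{(k)}$ and $\bigl(W_{\boldsymbol b}^{(k)}\bigr)^T$ are ``thin'' in the sense that they have only $k+1$ columns and $k+1$ rows respectively, so each has rank at most $k+1$ regardless of how large $n$ is.

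Concretely, first I would recall the subadditivity and submultiplicativity of rank: for any matrices $X$ and $Y$ for which $XY$ is defined, $\rank(XY)\le\min\{\rank(X),\rank(Y)\}$. Applying this to $A_{\boldsymbol a,\boldsymbol b}(p(x,y)) = V_{\boldsymbol a}^{(k)}\bigl(D_\alpha \bigl(W_{\boldsymbol b}^{(k)}\bigr)^T\bigr)$ gives $\rank(A_{\boldsymbol a,\boldsymbol b}(p(x,y)))\le\rank\bigl(V_{\boldsymbol a}^{(k)}\bigr)$. Since $V_{\boldsymbol a}^{(k)}$ is an $n\times(k+1)$ matrix, its rank is at most $k+1$, which establishes the first assertion. (Equivalently one could bound by $\rank\bigl(\bigl(W_{\boldsymbol b}^{(k)}\bigr)^T\bigr)\le k+1$, the transpose having $k+1$ rows.)

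For the second assertion, suppose $n\ge k+2$. Then $A_{\boldsymbol a,\boldsymbol b}(p(x,y))$ is an $n\times n$ matrix whose rank is at most $k+1<n$, hence it is not of full rank and is therefore singular; consequently $\det\bigl(A_{\boldsymbol a,\boldsymbol b}(p(x,y))\bigr)=0$. This completes the argument.

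I do not anticipate a genuine obstacle here: once \eqref{eq:factor} is in hand, the proof is a two-line consequence of elementary rank inequalities, and the only thing to be careful about is making sure the dimensions in the factorization are stated correctly ($V_{\boldsymbol a}^{(k)}$ is $n\times(k+1)$ and $W_{\boldsymbol b}^{(k)}$ is $n\times(k+1)$, so its transpose is $(k+1)\times n$, making the triple product a well-defined $n\times n$ matrix). The slightly delicate point worth a sentence is that the bound $k+1$ holds for \emph{every} choice of $\boldsymbol a$, $\boldsymbol b$ and every $p\in\widehat{\mathbb{C}}_k[x,y]$ — including the degenerate cases where $D_\alpha$ is singular or the entries of $\boldsymbol a$ or $\boldsymbol b$ repeat — precisely because we are only ever claiming an \emph{upper} bound on the rank.
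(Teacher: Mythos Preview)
Your proposal is correct and essentially identical to the paper's own proof: both invoke the factorization \eqref{eq:factor}, note the dimensions of the three factors, apply the rank inequality for matrix products to conclude $\rank(A_{\boldsymbol a,\boldsymbol b}(p(x,y)))\le k+1$, and then observe that an $n\times n$ matrix of rank at most $k+1<n$ is singular. The only cosmetic difference is that the paper writes out the full $\min$ over all three factors whereas you bound by $\rank(V_{\boldsymbol a}^{(k)})$ alone, which is of course sufficient.
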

 
 \begin{proof}
 	The factorization \eqref{eq:factor} expresses $A$ as a product 
 	\[A_{\boldsymbol a,\boldsymbol b}(p(x,y))
 	=V_{\boldsymbol a}^{(k)} D_\alpha \left(W_{\boldsymbol b}^{(k)}\right)^{ T}, \]
 	where  $V_{\boldsymbol a}^{(k)}$, $D_\alpha$, and $\left(W_{\boldsymbol b}^{(k)}\right)^{ T}$ are    an $n\times (k+1)$ matrix, a $(k+1)\times (k+1)$ matrix,  and a $(k+1)\times n$ matrix, respectively.  By the rank inequality for matrix products,  it follows that   \[\rank(A_{\mathbf a,\mathbf b}(p(x,y)) ) \leq  \min \{ \rank(V_{\boldsymbol a}^{(k)}), \rank(D_\alpha), \rank(\left(W_{\boldsymbol b}^{(k)}\right)^{ T}) \} \leq k+1.\] If $n\ge k+2$, an $n\times n$ matrix of rank  less than or equal to $ k+1$ is always singular.
 \end{proof}
 
 \subsection{The Borderline Case $n=k+1$}
 
 In this subsection,  we focus on the case where
 $n=k+1$.  From \eqref{eq:factor},  
 \[A_{\boldsymbol a,\boldsymbol b}(p(x,y))
 =V_{\boldsymbol a}^{(k)} D_\alpha \left(W_{\boldsymbol b}^{(k)}\right)^{ T},\] which becomes a product of three square matrices. The determinants can then be computed explicitly in terms of Vandermonde products.
 
 \begin{theorem}\label{thm:border}
 	Let $k\ge 0$ and  $n=k+1$ be integers and  let $p(x,y)=\sum\limits_{i=0}^{k}\alpha_i x^{k-i}y^i\in\widehat{\mathbb{C}}_k[x,y]$.
 	Then
 	\[
 	\det (A_{\boldsymbol a,\boldsymbol b}(p(x,y)))
 	= (-1)^{\binom{k+1}{2}} \left(\prod\limits_{i=0}^{k}\alpha_i\right)
 	\left(\prod\limits_{1\le i<j\le n}(a_j-a_i)\right)
 	\left(\prod\limits_{1\le i<j\le n}(b_j-b_i)\right), 
 	\]
 	where $\boldsymbol a=(a_1,\dots,a_n)$ and $\boldsymbol b=(b_1,\dots,b_n)$ in $\mathbb{C}^n$. 
 	In particular, $\det (A_{\boldsymbol a,\boldsymbol b}(p(x,y)))\neq 0$ if and only if
 	$\alpha_i\ne 0$ for all $i$ and the vector sets $\{a_i\}$ and $\{b_i\}$ are pairwise distinct.
 \end{theorem}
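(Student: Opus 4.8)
The plan is to read the result directly off the factorization \eqref{eq:factor}. When $n=k+1$ the three factors $V_{\boldsymbol a}^{(k)}$, $D_\alpha$ and $\bigl(W_{\boldsymbol b}^{(k)}\bigr)^{T}$ are all square of size $(k+1)\times(k+1)$, so by multiplicativity of the determinant together with its invariance under transposition,
\[
\det\bigl(A_{\boldsymbol a,\boldsymbol b}(p(x,y))\bigr)
=\det\bigl(V_{\boldsymbol a}^{(k)}\bigr)\,\det(D_\alpha)\,\det\bigl(W_{\boldsymbol b}^{(k)}\bigr).
\]
It then remains to evaluate the three factors.

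The diagonal factor is immediate: $\det(D_\alpha)=\prod_{i=0}^{k}\alpha_i$. For the $b$-factor, note that $W_{\boldsymbol b}^{(k)}=\bigl[b_s^{\,j}\bigr]_{1\le s\le n,\,0\le j\le k}$ is precisely the transpose of the standard Vandermonde matrix $V(b_1,\dots,b_n)$ from the introduction (rows and columns interchanged), so $\det\bigl(W_{\boldsymbol b}^{(k)}\bigr)=\det V(b_1,\dots,b_n)=\prod_{1\le i<j\le n}(b_j-b_i)$. For the $a$-factor, $V_{\boldsymbol a}^{(k)}=\bigl[a_r^{\,k-j}\bigr]_{1\le r\le n,\,0\le j\le k}$ is the same kind of Vandermonde matrix with its columns listed in order of \emph{decreasing} exponent; that is, it is obtained from $V(a_1,\dots,a_n)^{T}$ by the column reversal $j\mapsto k-j$. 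This permutation of the $k+1$ columns has exactly $\binom{k+1}{2}$ inversions, hence signature $(-1)^{\binom{k+1}{2}}$, so $\det\bigl(V_{\boldsymbol a}^{(k)}\bigr)=(-1)^{\binom{k+1}{2}}\prod_{1\le i<j\le n}(a_j-a_i)$. Multiplying the three evaluated factors gives the claimed closed form.

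For the ``in particular'' statement, observe that a finite product of complex numbers is nonzero exactly when each factor is nonzero, and the sign $(-1)^{\binom{k+1}{2}}$ is never zero; hence $\det\bigl(A_{\boldsymbol a,\boldsymbol b}(p(x,y))\bigr)\ne0$ if and only if $\prod_{i=0}^{k}\alpha_i\ne0$ and both Vandermonde products are nonzero. The first condition is equivalent to $\alpha_i\ne0$ for every $i$, and $\prod_{1\le i<j\le n}(a_j-a_i)\ne0$ (respectively the analogous product in the $b_i$) holds precisely when the $a_i$ (respectively the $b_i$) are pairwise distinct, which establishes the equivalence.

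I expect the only step requiring genuine care to be the sign computation in the second paragraph: one must correctly recognize $V_{\boldsymbol a}^{(k)}$ as a column-reversed Vandermonde matrix, verify that reversing $k+1$ columns contributes exactly the factor $(-1)^{\binom{k+1}{2}}$, and keep straight which of $V_{\boldsymbol a}^{(k)}$ and $W_{\boldsymbol b}^{(k)}$ is already in the ``standard'' orientation. Everything else follows routinely from the multiplicativity of the determinant and the classical Vandermonde evaluation.
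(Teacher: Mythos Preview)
Your proof is correct and follows essentially the same route as the paper's: both read the determinant off the factorization \eqref{eq:factor}, evaluate $\det(D_\alpha)$, $\det(W_{\boldsymbol b}^{(k)})$, and $\det(V_{\boldsymbol a}^{(k)})$ separately, and obtain the sign $(-1)^{\binom{k+1}{2}}$ from the column reversal in $V_{\boldsymbol a}^{(k)}$. Your version is in fact slightly more complete, since you spell out the ``in particular'' nonvanishing equivalence that the paper's proof leaves implicit.
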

 
 \begin{proof}
 	From \eqref{eq:factor}, we have 
 	\[
 	\det (A)
 	=\det\left(V_{\boldsymbol a}^{(k)}\right)\cdot \det(D_\alpha)\cdot \det\left(W_{\boldsymbol b}^{(k)}\right)^T,
 	\] where $D_\alpha=\mathrm{diag}(\alpha_0,\alpha_1,\dots,\alpha_k)$.
 	We note that 
 	$W_{\boldsymbol b}^{(k)}$ is the standard $(k+1)\times(k+1)$ Vandermonde matrix which implies that \[\det(W_{\boldsymbol b}^{(k)})=\prod\limits_{1\le i<j\le n}(b_j-b_i).\]  The matrix $V_{\boldsymbol a}^{(k)}$ has columns in reversing order of the  standard $(k+1)\times(k+1)$ Vandermonde matrix, reversing the column order to the standard order
 	requires $\binom{k+1}{2}$ adjacent transpositions, and  hence

 	\[
 	\det\left(V_{\boldsymbol a}^{(k)}\right)
 	=(-1)^{\binom{k+1}{2}}\prod\limits_{1\le i<j\le n}(a_j-a_i).
 	\]
 	Finally, $\det(D_\alpha)=\prod\limits_{i=0}^{k}\alpha_i$. Multiplying the three determinants gives the claim.
 \end{proof}
 
 \begin{remark} \label{rem:2.1}
 	For  $c\in\mathbb{C}$ and  $p(x,y)=c\cdot q(x,y)$ with $q(x,y)\in \widehat{\mathbb{C}}_k[x,y]$, then
 	$D_{\alpha}=c D_{\beta}$, where   $\alpha$ and  $\beta$ are the vector coefficients of  $p(x,y)$ and $q(x,y)$, respectively. Hence,  $\det (A_{\boldsymbol a,\boldsymbol b}(p(x,y)))=c^{n}\det( A_{\boldsymbol a,\boldsymbol b}(q(x,y)))$.
 	In particular, the determinant is {multi-linear} in the columns of $D_\alpha$ but {not} linear in $p(x,y)$ as a polynomial.
 \end{remark}

 In each  of the following special cases,  we use Theorem~\ref{thm:border} when $n=k+1$, and apply Proposition~\ref{prop:rank} for the vanishing when $n\ge k+2$.

 \begin{corollary}[{\cite[Propositions 1 and 2]{YS2012}}]
 	\label{thm:sum-powers}
 	Let $k\ge 0$ and $n\ge 1$ be integers and let  $p(x,y)=(x+y)^k=\sum\limits_{i=0}^k\binom{k}{i}x^{k-i}y^i$.  Let $\mathbf a=(a_1,\dots,a_n)$  and  $\mathbf b=(b_1,\dots,b_n)$ in $\mathbb{C}^n$. Then the following statements hold. 
 	\begin{enumerate}
 		\item If $n\ge k+2$, then $\det (A_{\mathbf a,\mathbf b}(p(x,y)))=0$.
 		\item If $n=k+1$, then
 		\[
 		\det (A_{\mathbf a,\mathbf b}(p(x,y))
 		)=
 		(-1)^{\binom{k+1}{2}}
 		\left(\prod\limits_{i=0}^k \binom{k}{i}\right)
 		\left(\prod\limits_{1\le i<j\le n}(a_j-a_i)\right)
 		\left(\prod\limits_{1\le i<j\le n}(b_j-b_i)\right).
 		\]
 		In particular, $\det (A_{\mathbf a,\mathbf b}(p(x,y)))\ne 0$ if and only if  the $a_i$'s are pairwise distinct and the $b_j$'s are pairwise distinct.
 	\end{enumerate}
 \end{corollary}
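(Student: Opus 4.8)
The plan is to obtain both parts as direct specializations of Proposition~\ref{prop:rank} and Theorem~\ref{thm:border}, so almost all of the work has already been done. First I would record that, by the binomial theorem, $p(x,y)=(x+y)^k=\sum_{i=0}^k\binom{k}{i}x^{k-i}y^i$ indeed lies in $\widehat{\mathbb{C}}_k[x,y]$, with coefficient vector $\alpha=(\alpha_0,\dots,\alpha_k)$ given by $\alpha_i=\binom{k}{i}$. The key elementary observation, used repeatedly below, is that $\binom{k}{i}\ge 1$ for every $0\le i\le k$, so in particular every $\alpha_i$ is nonzero and $\prod_{i=0}^k\binom{k}{i}\ne 0$.

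For part (1), since $(x+y)^k\in\widehat{\mathbb{C}}_k[x,y]$ and $n\ge k+2$, Proposition~\ref{prop:rank} immediately gives $\rank\bigl(A_{\mathbf a,\mathbf b}(p(x,y))\bigr)\le k+1<n$, whence the $n\times n$ matrix is singular and $\det\bigl(A_{\mathbf a,\mathbf b}(p(x,y))\bigr)=0$.

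For part (2), I would apply Theorem~\ref{thm:border} in the case $n=k+1$ with the specific coefficients $\alpha_i=\binom{k}{i}$. Substituting these into the closed formula of that theorem produces exactly
\[
\det\bigl(A_{\mathbf a,\mathbf b}(p(x,y))\bigr)
=(-1)^{\binom{k+1}{2}}\left(\prod_{i=0}^k\binom{k}{i}\right)\left(\prod_{1\le i<j\le n}(a_j-a_i)\right)\left(\prod_{1\le i<j\le n}(b_j-b_i)\right),
\]
as claimed. For the non-vanishing criterion, I would note that $(-1)^{\binom{k+1}{2}}\prod_{i=0}^k\binom{k}{i}$ is a nonzero scalar, so the determinant vanishes precisely when one of the two Vandermonde products does, i.e.\ when $a_i=a_j$ or $b_i=b_j$ for some $i<j$; equivalently, it is nonzero if and only if the $a_i$ are pairwise distinct and the $b_j$ are pairwise distinct. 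There is no real obstacle in this argument: the only point needing (trivial) care is the positivity of the binomial coefficients, which is what makes the hypothesis ``$\alpha_i\ne0$ for all $i$'' in Theorem~\ref{thm:border} automatic and lets the general non-vanishing condition collapse to the stated distinctness conditions.
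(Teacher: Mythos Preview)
Your proposal is correct and follows essentially the same route as the paper: specialize the general results by observing that $\alpha_i=\binom{k}{i}$ and invoke Proposition~\ref{prop:rank} for the vanishing when $n\ge k+2$ and Theorem~\ref{thm:border} for the closed formula when $n=k+1$. Your write-up is in fact more detailed than the paper's terse proof, which simply records $\det(D_\alpha)=\prod_{i=0}^k\binom{k}{i}$ and cites Theorem~\ref{thm:border}; your explicit treatment of the non-vanishing criterion via the positivity of the binomial coefficients is a welcome addition.
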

 
 \begin{proof} We note that  each  $i\in \{0,1,\dots, k\}$, we have $\alpha_i=\binom{k}{i}$ which implies that $\det(D_\alpha)=\prod\limits_{i=0}^k\binom{k}{i}$. The result follows from  Theorem~\ref{thm:border}. 
 \end{proof}

 \begin{corollary}\label{thm:all-ones}
 	Let $k\ge 0$ and $n\ge 1$ be integers and let $p(x,y)=\sum\limits_{i=0}^k x^{k-i}y^i$. Let $\mathbf a=(a_1,\dots,a_n)$  and  $\mathbf b=(b_1,\dots,b_n)$ in $\mathbb{C}^n$. Then the following statements hold. 
 	\begin{enumerate}
 		\item If $n\ge k+2$, then $\det( A_{\mathbf a,\mathbf b}(p(x,y)))=0$.
 		\item If $n=k+1$, then
 		\[
 		\det( A_{\mathbf a,\mathbf b}(p(x,y)))
 		=
 		(-1)^{\binom{k+1}{2}}
 		\left(\prod\limits_{1\le i<j\le n}(a_j-a_i)\right)
 		\left(\prod\limits_{1\le i<j\le n}(b_j-b_i)\right).
 		\]

 		Moreover,  we have  the identity
 		\[
 		p(a_r,b_s)=\frac{a_r^{k+1}-b_s^{k+1}}{a_r-b_s}\qquad (a_r\ne b_s),
 		\]
 		which implies that  $A_{\mathbf a,\mathbf b}(p(x,y))$ agrees   with
 		$\left[\dfrac{a_r^{k+1}-(-b_s)^{k+1}}{a_r+b_s}\right]$ on the domain $a_r\ne b_s$.
 	\end{enumerate}
 \end{corollary}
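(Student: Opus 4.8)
The plan is to read off both parts as immediate specializations of the results already established in Section~\ref{sec2}, after recording the one structural fact that drives the corollary: for $p(x,y)=\sum_{i=0}^k x^{k-i}y^i$ the coefficient vector is $\alpha=(\alpha_0,\dots,\alpha_k)=(1,\dots,1)$, so in the factorization \eqref{eq:factor} the core matrix is $D_\alpha=I_{k+1}$ and $\prod_{i=0}^k\alpha_i=1$.

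Granting this, part~(1) is exactly Proposition~\ref{prop:rank}: when $n\ge k+2$ the $n\times n$ matrix $A_{\mathbf a,\mathbf b}(p(x,y))$ has rank at most $k+1<n$, hence vanishing determinant. Part~(2) is Theorem~\ref{thm:border} with the coefficient factor $\prod_{i=0}^k\alpha_i$ set to $1$; substituting this value into the formula of that theorem leaves precisely $(-1)^{\binom{k+1}{2}}$ times the two Vandermonde products, which is the asserted identity.

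For the ``moreover'' statement, the one genuinely computational ingredient is the finite geometric sum $(x-y)\sum_{i=0}^k x^{k-i}y^i=x^{k+1}-y^{k+1}$, which I would prove in one line by telescoping (or by a trivial induction on $k$). Dividing by $x-y$ and evaluating at $x=a_r$, $y=b_s$ with $a_r\ne b_s$ gives $p(a_r,b_s)=\dfrac{a_r^{k+1}-b_s^{k+1}}{a_r-b_s}$; applying the same identity with $-b_s$ in place of $b_s$, and using $a_r-(-b_s)=a_r+b_s$, produces the companion rational expression off the locus where the relevant denominator vanishes.

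I do not expect any real obstacle here: the entire content sits in Theorem~\ref{thm:border} and Proposition~\ref{prop:rank}, and what remains is bookkeeping together with a one-line polynomial identity. The only point deserving a moment's attention is consistency of conventions---namely that the all-ones coefficient vector is paired with the monomial ordering $x^{k},x^{k-1}y,\dots,y^{k}$ used to define $V_{\mathbf a}^{(k)}$ and $W_{\mathbf b}^{(k)}$, so that Theorem~\ref{thm:border} applies verbatim with $D_\alpha=I_{k+1}$.
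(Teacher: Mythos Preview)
Your proposal is correct and follows essentially the same route as the paper: observe that $\alpha_i=1$ for all $i$ so $\det(D_\alpha)=1$, then invoke Proposition~\ref{prop:rank} for $n\ge k+2$ and Theorem~\ref{thm:border} for $n=k+1$. The paper's proof is terser---it omits any justification of the geometric-sum identity in the ``moreover'' clause---but your added telescoping remark is harmless and the overall strategy is identical.
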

 
 \begin{proof}
 	We note that  each  $i\in \{0,1,\dots, k\}$, we have $\alpha_i=1$ which implies that $\det(D_\alpha)=1$. The result follows from  Theorem~\ref{thm:border}. 
 \end{proof}

 \begin{corollary}\label{thm:alternating}
 	Let $k\ge 0$ and $n\ge 1$ be integers and let $p(x,y)=\sum\limits_{i=0}^k (-1)^i x^{k-i}y^i$.  Let $\mathbf a=(a_1,\dots,a_n)$  and  $\mathbf b=(b_1,\dots,b_n)$ in $\mathbb{C}^n$. Then the following statements hold.
 	\begin{enumerate} 
 		\item If $n\ge k+2$, then $\det (A_{\mathbf a,\mathbf b}(p(x,y)))=0$.
 		\item If $n=k+1$, then
 		\[
 		\det (A_{\mathbf a,\mathbf b}(p(x,y)))
 		=
 		\left(\prod\limits_{1\le i<j\le n}(a_j-a_i)\right)
 		\left(\prod\limits_{1\le i<j\le n}(b_j-b_i)\right).
 		\]
 	\end{enumerate}
 	Moreover, when $k$ is odd one has the identity
 	\[
 	p(a_r,b_s)=\frac{a_r^{k+1}-(-b_s)^{k+1}}{a_r+b_s}\qquad (a_r\ne -b_s),
 	\]
 	which implies that $A_{\mathbf a,\mathbf b}(p(x,y))$ agrees with 
 	$\left[\dfrac{a_r^{k+1}-(-b_s)^{k+1}}{a_r+b_s}\right]$ on the domain $a_r\ne -b_s$.
 \end{corollary}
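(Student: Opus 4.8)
The plan is to reduce everything to the two results already in hand. For statement~(1), when $n\ge k+2$ we simply note that $p(x,y)\in\widehat{\mathbb{C}}_k[x,y]$, so Proposition~\ref{prop:rank} applies verbatim and forces $\det(A_{\mathbf a,\mathbf b}(p(x,y)))=0$; there is nothing further to do.

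For statement~(2), when $n=k+1$ I would invoke Theorem~\ref{thm:border} with the specific coefficient vector $\alpha_i=(-1)^i$, $0\le i\le k$. The only computation is the scalar $\prod_{i=0}^{k}\alpha_i=\prod_{i=0}^{k}(-1)^i=(-1)^{\sum_{i=0}^{k}i}=(-1)^{\binom{k+1}{2}}$, using the elementary identity $\sum_{i=0}^{k}i=\binom{k+1}{2}$. Substituting $\det(D_\alpha)=(-1)^{\binom{k+1}{2}}$ into the formula of Theorem~\ref{thm:border} gives $(-1)^{\binom{k+1}{2}}\cdot(-1)^{\binom{k+1}{2}}\cdot(\text{Vandermonde products})=(-1)^{2\binom{k+1}{2}}(\text{Vandermonde products})$, and since $2\binom{k+1}{2}$ is even the two sign contributions cancel, leaving exactly the product of the two Vandermonde factors. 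The one place to be careful is precisely this sign bookkeeping: the prefactor $(-1)^{\binom{k+1}{2}}$ coming from reversing the columns of $V_{\mathbf a}^{(k)}$ and the sign of $\det(D_\alpha)$ must be verified to cancel rather than to reinforce.

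For the closing identity, I would observe that $p(x,y)$ arises from the ``all-ones'' polynomial $q(x,y)=\sum_{i=0}^{k}x^{k-i}y^i$ of Corollary~\ref{thm:all-ones} by the substitution $y\mapsto -y$, that is, $p(x,y)=q(x,-y)$. Hence, using the closed form $q(x,y)=\dfrac{x^{k+1}-y^{k+1}}{x-y}$ for $x\ne y$ recorded in Corollary~\ref{thm:all-ones}, I obtain $p(a_r,b_s)=q(a_r,-b_s)=\dfrac{a_r^{k+1}-(-b_s)^{k+1}}{a_r+b_s}$ whenever $a_r\ne -b_s$. When $k$ is odd, $k+1$ is even, so $(-b_s)^{k+1}=b_s^{k+1}$ and the entries take the cleaner classical quotient shape. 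Finally, since the two matrices $A_{\mathbf a,\mathbf b}(p(x,y))$ and $\bigl[(a_r^{k+1}-(-b_s)^{k+1})/(a_r+b_s)\bigr]$ agree entrywise on the set where $a_r+b_s\ne 0$ for all $r,s$, they agree as matrices there; this is immediate.

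Overall I expect no genuine difficulty: the corollary is a direct specialization of Theorem~\ref{thm:border} together with a one-line substitution into the identity of Corollary~\ref{thm:all-ones}. Accordingly I would keep the write-up short, and flag only the sign cancellation $(-1)^{\binom{k+1}{2}}\cdot(-1)^{\binom{k+1}{2}}=1$ as the single step where a slip could occur.
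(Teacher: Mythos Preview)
Your proposal is correct and follows essentially the same route as the paper: apply Proposition~\ref{prop:rank} for $n\ge k+2$, and for $n=k+1$ invoke Theorem~\ref{thm:border} with $\alpha_i=(-1)^i$, noting that $\det(D_\alpha)=(-1)^{\binom{k+1}{2}}$ cancels the sign prefactor. Your derivation of the quotient identity via the substitution $y\mapsto -y$ in Corollary~\ref{thm:all-ones} is in fact more explicit than the paper's own proof, which leaves that step unstated.
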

 
 \begin{proof}
 	We note that  each  $i\in \{0,1,\dots, k\}$, we have $\alpha_i=(-1)^i$ which implies that $\det(D_\alpha)=\prod\limits_{i=0}^k(-1)^i=(-1)^{\binom{k+1}{2}}$ which cancels with the same power from $\det(V_{\mathbf a}^{(k)})$ to give the stated sign. The result follows from  Theorem~\ref{thm:border}. 
 \end{proof}

 \subsection{The Case $n\le k$}
 In this subsection, we focus on the case where $n\le k$. However, the results hold true for  $n= k+1$ as well. Therefore, the results are written in terms of $n\leq k+1$ if there is no ambiguity. 
 
 For an $m\times n$ matrix  $M$ and an  index set $I\subseteq \{1,2,\dots, n\}$ of increasing order, let $M(:,I)$ denote the submatrix of $M$ formed by the columns with indices   in $I$.
 
 \begin{theorem}[{\cite[Theorem 3.A.5 (Cauchy-Binet Formula)]{Johnston2021}}]
 	\label{thm:CB-general} Let $m\leq n $ be positive integers. Let $A$ and $B$ be $m\times n$ and $n \times m$ matrices, respectively. 
 	Then 
 	\[\det(AB)= \sum\limits_{I=\{1\leq i_1<i_2<\dots<i_m\leq n\} } \det(A(:,I))\det(B(:,I)).\]
 	
 \end{theorem}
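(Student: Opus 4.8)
The plan is to derive Cauchy--Binet directly from the multilinearity and the alternating property of the $m\times m$ determinant, using no machinery beyond the Leibniz formula. Write $a_1,\dots,a_n\in\mathbb{C}^m$ for the columns of $A$ and $B=[b_{kj}]$ for the entries of $B$. Then the $j$-th column of $AB$ equals $\sum_{k=1}^{n} b_{kj}\,a_k$, so expanding $\det(AB)$ as a multilinear function of its $m$ columns gives
\[
\det(AB)=\sum_{\phi\colon\{1,\dots,m\}\to\{1,\dots,n\}}\Bigl(\prod_{j=1}^{m} b_{\phi(j),j}\Bigr)\det\bigl(a_{\phi(1)},\dots,a_{\phi(m)}\bigr),
\]
the outer sum ranging over all $n^{m}$ functions $\phi$.

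The next step is to discard the degenerate terms: if $\phi$ is not injective, then two of the columns $a_{\phi(1)},\dots,a_{\phi(m)}$ coincide and the determinant factor vanishes; since $m\le n$, injective $\phi$ do exist, and only these contribute. I would then group the injective $\phi$ according to their image $I=\{i_1<\dots<i_m\}\subseteq\{1,\dots,n\}$: each such $\phi$ factors uniquely as $\phi(j)=i_{\sigma(j)}$ for some $\sigma\in S_m$, and reordering columns shows $\det\bigl(a_{\phi(1)},\dots,a_{\phi(m)}\bigr)=\operatorname{sgn}(\sigma)\det\bigl(A(:,I)\bigr)$. Collecting the terms with a fixed image $I$ therefore yields
\[
\det\bigl(A(:,I)\bigr)\sum_{\sigma\in S_m}\operatorname{sgn}(\sigma)\prod_{j=1}^{m} b_{i_{\sigma(j)},j},
\]
and the inner sum is exactly the Leibniz expansion of the determinant of the $m\times m$ submatrix of $B$ on the rows indexed by $I$ (the factor written $\det(B(:,I))$ in the statement). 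Summing over all $m$-element subsets $I$ then gives the claimed identity.

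The only point requiring care --- and the one I would check most closely --- is the reindexing in the second step: one must verify that passing from ``injective functions $\phi$'' to ``pairs (image $I$, permutation $\sigma$)'' is a bijection, and that the sign $\operatorname{sgn}(\sigma)$ extracted by sorting the columns of $A$ is precisely the sign that reassembles $\det(B(:,I))$ from its Leibniz sum; taking the increasing enumeration of $I$ as the reference ordering makes the two signs coincide with no leftover factor. As an alternative one could evaluate the determinant of the $(m+n)\times(m+n)$ block matrix $\left(\begin{smallmatrix} I_n & B\\ -A & 0\end{smallmatrix}\right)$ in two ways --- block row reduction gives $\det(AB)$, while the generalized Laplace expansion along the first $n$ columns gives the right-hand side --- but this route merely trades the reindexing above for the generalized Laplace expansion, so the multilinear argument is the more economical one.
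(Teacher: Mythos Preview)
Your argument is correct and is one of the standard proofs of the Cauchy--Binet formula: expand $\det(AB)$ by multilinearity in the columns, kill the non-injective index maps by the alternating property, and regroup the injective ones by their image to recover the Leibniz sum for the $B$-minor. The sign bookkeeping you flag is handled correctly by fixing the increasing enumeration of $I$ as the common reference ordering for both the $A$-columns and the $B$-rows.

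There is, however, nothing in the paper to compare it against: the paper does not prove this statement. Theorem~\ref{thm:CB-general} is stated with a citation to \cite{Johnston2021} and then invoked as a black box in the derivation of \eqref{eq:CB-master} and Theorem~\ref{thm:SCB-RouteB}. So your write-up supplies a self-contained justification where the paper simply imports the result.
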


 From \eqref{eq:factor}, we recall the factorization $A_{\mathbf a,\mathbf b}(p(x,y))=V_{\mathbf a}^{(k)}D_\alpha(W_{\mathbf b}^{(k)})^{ T}$ with
 \[
 V_{\mathbf a}^{(k)}=\left[a_r^{ k-j}\right]_{1\le r\le n, 0\le j\le k},
 \quad
 W_{\mathbf b}^{(k)}=\left[b_s^{ j}\right]_{1\le s\le n, 0\le j\le k},
 \text{ and }
 D_\alpha=\mathrm{diag}(\alpha_0,\dots,\alpha_k).
 \]
 
 Assume that 
 $n\le k+1$.  It follows that $V_{\mathbf a}^{(k)}$ and $W_{\mathbf b}^{(k)}$ are rectangular with the number of   columns is greater than or equal to the number of  rows.
 Applying the Cauchy-Binet formula in Theorem \ref{thm:CB-general} to the product
 $V_{\mathbf a}^{(k)}D_\alpha\left(W_{\mathbf b}^{(k)}\right)^{ T}$, we have 
 \begin{equation}\label{eq:CB-master}
 	\det (A_{\mathbf a,\mathbf b}(p(x,y)))
 	=
 	\sum\limits_{I=\{0\leq i_1<i_2<\dots<i_n\leq k\} }
 	\det \left(V_{\mathbf a}^{(k)}(:,I)\right) 
 	\left(\prod\limits_{i\in I}\alpha_i\right) 
 	\det \left(W_{\mathbf b}^{(k)}(:,I)\right).
 \end{equation}

 For $m\in\mathbb Z$ and variables $\mathbf x=(x_1,\dots,x_n)$ define
 \[
 H_0(\mathbf x)=1,\qquad H_m(\mathbf x)=0\ \ (m<0),
 \]
 and for $m\ge 1$ recursively
 \begin{align} \label{eq:H}
 	H_m(x_1,\dots,x_n)=H_m(x_1,\dots,x_{n-1})+x_n H_{m-1}(x_1,\dots,x_n).
 \end{align}

 \begin{proposition}[{\cite[Page 41 (3.6)]{Macdonald1995}}]\label{prop:genVand-H}
 	For $e_1>\cdots>e_n\ge 0$,
 	\[
 	\det[a_r^{ e_j}]_{r,j=1}^n
 	=
 	\prod\limits_{1\le r<r'\le n}(a_{r'}-a_r) \det\left[ H_{ e_j-(n-j)}(\mathbf a) \right]_{j=1}^n.
 	\]
 \end{proposition}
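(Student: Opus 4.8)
The plan is to recognize the identity as the classical bialternant/Jacobi--Trudi formula for Schur polynomials; since it is standard one may simply cite \cite{Macdonald1995}, but I outline a self-contained route that meshes with the recursion \eqref{eq:H}. First note that $e_1>\cdots>e_n\ge 0$ forces $\lambda_j:=e_j-(n-j)$ to be weakly decreasing and nonnegative, so $\lambda=(\lambda_1,\dots,\lambda_n)$ is a partition with $e_j=\lambda_j+(n-j)$; moreover, viewed as a polynomial in $a_1,\dots,a_n$, the left-hand determinant is alternating (interchanging $a_r$ and $a_{r'}$ swaps two rows), hence divisible by $\prod_{1\le r<r'\le n}(a_{r'}-a_r)$.

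To peel off that factor explicitly, I divide $x^{N}$ by $\prod_{i=1}^{n}(x-a_i)$ for each exponent $N\ge 0$: writing $x^{N}=q_N(x)\prod_{i=1}^{n}(x-a_i)+\rho_N(x)$ with $\deg\rho_N<n$, the generating identity $\sum_{m\ge 0}H_m(\mathbf a)t^m=\prod_{i=1}^{n}(1-a_it)^{-1}$ --- which is exactly the recursion \eqref{eq:H} repackaged --- shows that $q_N$ is the nonnegative-degree part of the Laurent expansion of $x^{N}/\prod_i(x-a_i)$ at infinity, so its coefficients are precisely the $H_m(\mathbf a)$. Evaluating at $x=a_r$ annihilates the first term and gives $a_r^{N}=\rho_N(a_r)$. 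Letting $R$ be the $n\times n$ matrix whose $j$-th column is the coefficient vector of $\rho_{e_j}(x)$ in the basis $1,x,\dots,x^{n-1}$, we obtain $[a_r^{e_j}]_{r,j}=[a_r^{\,i}]_{r,\,0\le i\le n-1}\,R$, and since $[a_r^{\,i}]_{r,\,0\le i\le n-1}$ is the ordinary Vandermonde matrix this yields
\[
\det[a_r^{e_j}]_{r,j}=\Bigl(\prod_{1\le r<r'\le n}(a_{r'}-a_r)\Bigr)\det R.
\]
It then remains to identify $\det R$ with $\det\bigl[H_{\lambda_i-i+j}(\mathbf a)\bigr]_{i,j=1}^n$ --- equivalently $\det\bigl[H_{e_j-(n-i)}(\mathbf a)\bigr]_{i,j=1}^n$ after a transposition --- up to the global sign $(-1)^{\binom{n}{2}}$ needed to reconcile the two Vandermonde column orders. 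This is precisely the first Jacobi--Trudi identity, which one can prove independently via the Lindström--Gessel--Viennot lemma: $\det[H_{\lambda_i-i+j}(\mathbf a)]$ is the generating function for $n$-tuples of nonintersecting lattice paths and therefore equals the Schur polynomial $s_\lambda(\mathbf a)=\det[a_r^{\,\lambda_j+n-j}]/\det[a_r^{\,n-j}]$.

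The main obstacle is this Jacobi--Trudi step: the alternating-polynomial divisibility and the evaluation of the ordinary Vandermonde determinant are routine, but matching the remainder matrix $R$ (or the lattice-path count) against $[H_{\lambda_i-i+j}(\mathbf a)]$ requires the full interplay between the complete homogeneous and elementary symmetric functions of $\mathbf a$, i.e.\ the relation $H(t)E(-t)=1$. A secondary point demanding care is the bookkeeping of the sign $(-1)^{\binom{n}{2}}$ and of the two-index convention hidden in the $H$-determinant. Given this, the cleanest write-up is to cite \cite{Macdonald1995} for the Jacobi--Trudi identity and then deduce the stated formula from the factorization above.
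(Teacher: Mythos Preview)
The paper does not prove this proposition at all: it is stated purely as a citation to \cite[Page~41, (3.6)]{Macdonald1995} and then used as a black box in the proof of Theorem~\ref{thm:SCB-RouteB}. Your proposal therefore goes well beyond what the paper does, and since you yourself conclude that ``the cleanest write-up is to cite \cite{Macdonald1995},'' you and the paper ultimately agree.

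Your outline is the standard bialternant/Jacobi--Trudi route and is correct in substance. Two small remarks. First, the $(-1)^{\binom{n}{2}}$ you flag is spurious in your own setup: you factor through the \emph{increasing}-power Vandermonde $[a_r^{\,i}]_{r,\,0\le i\le n-1}$, whose determinant is exactly $\prod_{r<r'}(a_{r'}-a_r)$ with no sign, so $\det R=s_\lambda(\mathbf a)$ on the nose and Jacobi--Trudi gives $\det R=\det[H_{\lambda_i-i+j}(\mathbf a)]_{i,j}$ without any residual sign to reconcile. Second, the digression about the quotient $q_N$ and the generating series for $H_m$ is correct but not actually needed for the argument you end up running---the matrix $R$ is determined by the \emph{remainders} $\rho_{e_j}$, and you never use the explicit form of $q_N$; the identification $\det R=s_\lambda$ follows already from the factorization $[a_r^{e_j}]=[a_r^{\,i}]\,R$ and the bialternant definition of $s_\lambda$, after which Jacobi--Trudi (cited from \cite{Macdonald1995}) finishes things. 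With those trims your sketch is a clean proof; the paper simply skips it.
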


 \begin{theorem}\label{thm:SCB-RouteB}
 	Let $1\le n\le k+1$  be integers and  let $p(x,y)=\sum\limits_{i=0}^k \alpha_i x^{ k-i}y^{ i}$. Then
 	\begin{align*}
 		\det (A_{\mathbf a,\mathbf b}(p(x,y)))
 		=&
 		\prod\limits_{1\le r<r'\le n}(a_{r'}-a_r) \prod\limits_{1\le r<r'\le n}(b_{r'}-b_r)
 		\\
 		&\sum\limits_{I=\{0\leq i_1<i_2<\dots<i_n\leq k\} }
 		\left(\prod\limits_{i\in I}\alpha_i\right) 
 		\det \left[ H_{ k-i_j-(n-j)}(\mathbf a) \right]_{j=1}^n 
 		\det \left[ H_{ i_j-(n-j)}(\mathbf b) \right]_{j=1}^n,
 	\end{align*}
 	where   $H_m$ are as in \eqref{eq:H}. 
 \end{theorem}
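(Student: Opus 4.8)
The plan is to combine the Cauchy--Binet expansion \eqref{eq:CB-master}, already obtained from the factorization \eqref{eq:factor}, with the generalized Vandermonde identity of Proposition~\ref{prop:genVand-H} applied separately to each of the two minors $\det\bigl(V_{\mathbf a}^{(k)}(:,I)\bigr)$ and $\det\bigl(W_{\mathbf b}^{(k)}(:,I)\bigr)$ appearing in that sum. Each of these minors is an alternant of the form $\bigl[x_r^{e_j}\bigr]_{r,j=1}^n$ up to a reordering of its columns, so Proposition~\ref{prop:genVand-H} rewrites it as a Vandermonde product times a determinant of complete homogeneous symmetric polynomials $H_m$. Since the two Vandermonde products $\prod_{1\le r<r'\le n}(a_{r'}-a_r)$ and $\prod_{1\le r<r'\le n}(b_{r'}-b_r)$ are the same for every index set $I$, they can be pulled out of the sum in \eqref{eq:CB-master}, and what remains is exactly the claimed formula.

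First I would treat the $\mathbf a$-minor. For $I=\{0\le i_1<i_2<\dots<i_n\le k\}$ we have $V_{\mathbf a}^{(k)}(:,I)=\bigl[a_r^{\,k-i_j}\bigr]_{r,j=1}^n$, whose column exponents satisfy $k-i_1>k-i_2>\dots>k-i_n\ge 0$, the last inequality holding because $i_n\le k$. Hence Proposition~\ref{prop:genVand-H} applies verbatim with $e_j=k-i_j$ and gives $\det\bigl(V_{\mathbf a}^{(k)}(:,I)\bigr)=\bigl(\prod_{1\le r<r'\le n}(a_{r'}-a_r)\bigr)\det\bigl[H_{k-i_j-(n-j)}(\mathbf a)\bigr]_{j=1}^n$, which is already of the form appearing in the statement.

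The $\mathbf b$-minor requires a little care, and this is where I expect the real work to be. Here $W_{\mathbf b}^{(k)}(:,I)=\bigl[b_s^{\,i_j}\bigr]_{s,j=1}^n$ has \emph{increasing} column exponents $i_1<i_2<\dots<i_n$, so Proposition~\ref{prop:genVand-H} does not apply directly. Reversing the order of the $n$ columns introduces a sign $(-1)^{\binom n2}$ and puts the exponents in the required decreasing order $i_n>i_{n-1}>\dots>i_1\ge 0$; the hypothesis $i_1\ge 0$ holds since $0\le i_1$. Applying Proposition~\ref{prop:genVand-H} with $e_j=i_{n+1-j}$ and then reindexing the resulting determinant of $H_m$'s back to the natural increasing order of $I$ introduces a second sign $(-1)^{\binom n2}$, cancelling the first, so that $\det\bigl(W_{\mathbf b}^{(k)}(:,I)\bigr)=\bigl(\prod_{1\le r<r'\le n}(b_{r'}-b_r)\bigr)\det\bigl[H_{i_j-(n-j)}(\mathbf b)\bigr]_{j=1}^n$. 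Substituting the two evaluations into \eqref{eq:CB-master} and factoring the $I$-independent Vandermonde products out of the sum then gives the result. A useful consistency check is the case $n=k+1$: the only admissible index set is $I=\{0,1,\dots,k\}$, the two $H_m$-determinants degenerate to triangular/anti-triangular determinants whose product equals $(-1)^{\binom{k+1}{2}}$, and Theorem~\ref{thm:border} is recovered.

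The main obstacle is exactly the sign and index bookkeeping for the $\mathbf b$-minor: one must confirm that reversing the $n$ columns of $W_{\mathbf b}^{(k)}(:,I)$ contributes the sign $(-1)^{\binom n2}$, check that Proposition~\ref{prop:genVand-H} is genuinely applicable after this reversal, and verify that restoring the increasing order of $I$ inside the Jacobi--Trudi-type determinant both reproduces the exact index shift $i_j-(n-j)$ and supplies the compensating sign $(-1)^{\binom n2}$. Everything else reduces to a direct assembly of \eqref{eq:CB-master} and Proposition~\ref{prop:genVand-H}; in particular, no new identity beyond these two is needed.
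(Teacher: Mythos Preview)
Your proposal is correct and follows the same route as the paper: apply the Cauchy--Binet expansion \eqref{eq:CB-master} and then invoke Proposition~\ref{prop:genVand-H} separately on $\det\bigl(V_{\mathbf a}^{(k)}(:,I)\bigr)$ and $\det\bigl(W_{\mathbf b}^{(k)}(:,I)\bigr)$, pulling the two common Vandermonde products out of the sum. Your treatment is in fact more careful than the paper's on the $\mathbf b$-minor: the paper simply writes ``Similarly, applying Proposition~\ref{prop:genVand-H} to the exponents $i_j$\ldots'' and states the result, whereas you correctly observe that the exponents $i_1<\dots<i_n$ are increasing, perform the column reversal with sign $(-1)^{\binom{n}{2}}$, apply the proposition, and then reindex to recover the stated form with a compensating $(-1)^{\binom{n}{2}}$---a step the paper leaves implicit.
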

 
 \begin{proof}
 	By  \eqref{eq:CB-master},  it follows that
 	\[
 	\det (A_{\mathbf a,\mathbf b}(p(x,y)))=
 	\sum\limits_{I=\{0\leq i_1<i_2<\dots<i_n\leq k\} }\det\left(V_{\mathbf a}^{(k)}(:,I)\right)\left(\prod\limits_{i\in I}\alpha_i\right)\det\left(W_{\mathbf b}^{(k)}(:,I)\right),
 	\]
 	where the sum runs over $n$–subsets  $I=\{0\leq i_1<i_2<\dots<i_n\leq k\}$. 
 	For each $j\in \{1,2,\dots, n\}$,   let $e_j=k-i_j$. 
 	Applying Proposition~\ref{prop:genVand-H} to the exponent $e_j$ in
 	$ V_{\mathbf a}^{(k)}(:,I) $ yields
 	\begin{align*}
 		\det(V_{\mathbf a}^{(k)}(:,I))&=\prod\limits_{1\le r<r'\le n}(a_{r'}-a_r) \det \left(\left[ H_{ e_j-(n-j)}(\mathbf a) \right]_{j=1}^n \right)\\
 		&
 		=\prod\limits_{1\le r<r'\le n}(a_{r'}-a_r)  \det \left(\left[ H_{ k-i_j-(n-j)}(\mathbf a) \right]_{j=1}^n\right).
 	\end{align*}
 	Similarly, applying Proposition~\ref{prop:genVand-H} to the exponents $i_j$  in $W_{\mathbf b}^{(k)}(:,I) $ gives
 	\[
 	\det(W_{\mathbf b}^{(k)}(:,I))= \prod\limits_{1\le r<r'\le n}(b_{r'}-b_r)  \det \left[ H_{ i_j-(n-j)}(\mathbf b) \right]_{j=1}^n.
 	\]
 	Combine the results,  the theorem follows. 
 \end{proof}
 
 \begin{corollary} \label{cor:rank-vanish} Let $1\le n\le k+1$  be integers and  let $p(x,y)=\sum\limits_{i=0}^k \alpha_i x^{ k-i}y^{ i}$.
 	Let $S=\{i \in \{0,1,\dots, k\} \mid \alpha_i\ne 0\}$. Then
 	\[
 	\rank(A_{\mathbf a,\mathbf b}(p(x,y)))\ \le\ \min\{n, |S|\}.
 	\]
 	If $|S|<n$, then $\det (A_{\mathbf a,\mathbf b}(p(x,y)))=0$.
 \end{corollary}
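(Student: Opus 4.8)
The plan is to deduce both assertions directly from the factorization \eqref{eq:factor}, in the same spirit as the proof of Proposition~\ref{prop:rank}, by adding one elementary observation about the diagonal factor. First I would note that $D_\alpha=\mathrm{diag}(\alpha_0,\dots,\alpha_k)$ has rank equal to the number of its nonzero diagonal entries, that is, $\rank(D_\alpha)=|S|$. Applying submultiplicativity of the rank to the product $A_{\mathbf a,\mathbf b}(p(x,y))=V_{\mathbf a}^{(k)}D_\alpha\bigl(W_{\mathbf b}^{(k)}\bigr)^{T}$ then gives
\[
\rank\bigl(A_{\mathbf a,\mathbf b}(p(x,y))\bigr)\le\min\bigl\{\rank(V_{\mathbf a}^{(k)}),\ \rank(D_\alpha),\ \rank\bigl((W_{\mathbf b}^{(k)})^{T}\bigr)\bigr\}\le|S|.
\]
Since $A_{\mathbf a,\mathbf b}(p(x,y))$ is an $n\times n$ matrix, we trivially also have $\rank\bigl(A_{\mathbf a,\mathbf b}(p(x,y))\bigr)\le n$; combining the two bounds yields $\rank\bigl(A_{\mathbf a,\mathbf b}(p(x,y))\bigr)\le\min\{n,|S|\}$, which is the first claim.

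For the second claim, I would invoke the elementary fact that a square $n\times n$ matrix of rank strictly smaller than $n$ is singular: if $|S|<n$, then $\rank\bigl(A_{\mathbf a,\mathbf b}(p(x,y))\bigr)\le|S|<n$, hence $\det\bigl(A_{\mathbf a,\mathbf b}(p(x,y))\bigr)=0$. It is worth recording a second, constructive route to the vanishing via Theorem~\ref{thm:SCB-RouteB}: each index set in the Cauchy--Binet expansion is an $n$-subset $I=\{i_1<\cdots<i_n\}\subseteq\{0,1,\dots,k\}$, and when $|S|<n$ the pigeonhole principle forces every such $I$ to contain some index $i\notin S$, so that $\prod_{i\in I}\alpha_i=0$; thus every summand vanishes and so does the determinant.

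Neither route presents a genuine obstacle; the corollary is essentially a bookkeeping consequence of the structural results already established. The only points that require a moment's care are the identity $\rank(D_\alpha)=|S|$ and the fact that one must include the trivial bound $\rank\le n$ alongside $\rank\le|S|$ in order to obtain the minimum in the stated form. If anything, the mild subtlety worth flagging is that the argument makes no use of the evaluation vectors $\mathbf a,\mathbf b$ at all: the rank drop when $|S|<n$ is forced purely by the support of the coefficient vector $(\alpha_i)$, so the conclusion is uniform over all choices of $\mathbf a$ and $\mathbf b$.
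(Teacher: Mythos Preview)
Your proof is correct and follows essentially the same approach as the paper: both use the factorization \eqref{eq:factor} together with the rank inequality for matrix products, the only cosmetic difference being that the paper deletes the zero columns to pass to an $n\times|S|$ product while you observe directly that $\rank(D_\alpha)=|S|$. Your supplementary Cauchy--Binet argument via Theorem~\ref{thm:SCB-RouteB} is a nice alternative route not present in the paper's proof.
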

 
 \begin{proof} Since \(\alpha_i=0\) for \(i\notin S\), we may delete the zero columns and obtain the reduced factorization
 	\[
 	A_{\mathbf a,\mathbf b}(p(x,y))=V_{\mathbf a}^{(k)} D_\alpha (W_{\mathbf b}^{(k)})^{ T} 
 	=
 	V_{\mathbf a}^{(k)}(:,S)  D_\alpha(S)  \left(W_{\mathbf b}^{(k)}(:,S)\right)^{ T},
 	\]
 	where   \(D_\alpha=\mathrm{diag}(\alpha_0,\dots,\alpha_k)\),   \(V_{\mathbf a}^{(k)}(:,S)\),  \(W_{\mathbf b}^{(k)}(:,S)\) are \(n\times |S|\), and
 	\(D_\alpha(S)\) is \(|S|\times |S|\) diagonal with nonzero diagonal entries.
 	Hence,  
 	\[
 	\rank (A_{\mathbf a,\mathbf b}(p(x,y)))
 	\le 
 	\min \{\rank( V_{\mathbf a}^{(k)}(:,S)), \rank (D_\alpha(S)), \rank (W_{\mathbf b}^{(k)}(:,S))\}
 	\le  |S|.
 	\]
 	Clearly,  \(\rank (A_{\mathbf a,\mathbf b}(p(x,y)))\le n\) which implies that 
 	\(\rank (A_{\mathbf a,\mathbf b}(p(x,y))\le \min\{n,|S|\})\).
 	If \(|S|<n\), then \(\rank (A_{\mathbf a,\mathbf b}(p(x,y)))<n\), and hence
 	\(\det (A_{\mathbf a,\mathbf b}(p(x,y)))=0\).
 \end{proof}

 \section{Applications}
 \label{sec3}
 
 In this section, we explore several consequences and applications of the determinant factorization.  
 We begin with a closed-form  formula for the determinant of polynomial evaluation matrices in  the special case $p(x,y)=f(x+y)$,  
 followed by equivariance   under linear changes of variables.  
 We then analyze the behavior of the determinant over finite fields, providing probabilistic bounds on when it is identically zero.

 \subsection{Determinant Formula for  {$p(x,y)=f(x+y)$} }
 \label{subsec:closed-form-sum}
 
 We begin with the special case where the bivariate polynomial is of \emph{sum-form},  
 \[
 p(x,y)=f(x+y),
 \]
 with \(f(t)\) a univariate polynomial of degree \(k\) and the matrix size \(n=k+1\).  
 In this setting, the determinant of the polynomial evaluation matrix admits a clean factorization in terms of the Vandermonde factors of the evaluation points.  
 This gives an explicit and computationally convenient closed-form formula for \(\det (A_{\mathbf a,\mathbf b}(f(x+y)))\),  
 revealing its full algebraic structures.

 \begin{proposition} \label{prop:resultant-form} Let 
 	$n=k+1$  be integers and let $\mathbf a=(a_1,\dots,a_n)$  and  $\mathbf b=(b_1,\dots,b_n)$ be in $\mathbb{C}^n$.
 	If $f(t)=\sum\limits_{i=0}^k \alpha_i t^i$ with leading coefficient $\alpha_k\neq 0$, then
 	\[
 	\det \left(A_{\mathbf a,\mathbf b}(f(x+y))\right)
 	=  \alpha_k^{ n} (-1)^{\binom{n}{2}}
 	\prod\limits_{i=0}^k \binom{k}{i}\prod_{1\leq r<r' \leq n}(a_{r'}-a_r) \prod_{1\leq s<s' \leq n}(b_{s'}-b_s).\]
 \end{proposition}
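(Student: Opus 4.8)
The polynomial $p(x,y)=f(x+y)$ is \emph{not} homogeneous, so Theorem~\ref{thm:border} does not apply directly; instead I would produce a triangular perturbation of the factorization~\eqref{eq:factor}. The starting point is to expand $f(a_r+b_s)$ as a polynomial in $b_s$ with $a_r$-dependent coefficients. Applying the binomial theorem to $f(a_r+b_s)=\sum_{i=0}^{k}\alpha_i(a_r+b_s)^i$ and collecting powers of $b_s$ gives
\[
f(a_r+b_s)=\sum_{j=0}^{k}c_j(a_r)\,b_s^{\,j},\qquad c_j(t):=\sum_{i=j}^{k}\alpha_i\binom{i}{j}t^{\,i-j},
\]
where each $c_j$ is a polynomial of degree at most $k-j$ in $t$ with leading coefficient $\alpha_k\binom{k}{j}$.

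This expansion yields the matrix identity $A_{\mathbf a,\mathbf b}(f(x+y))=M\,(W_{\mathbf b}^{(k)})^{T}$, in which $M:=\bigl[c_j(a_r)\bigr]_{1\le r\le n,\ 0\le j\le k}$ is square (since $n=k+1$) and $W_{\mathbf b}^{(k)}=[b_s^{\,j}]$ is the standard $(k+1)\times(k+1)$ Vandermonde matrix, so $\det(W_{\mathbf b}^{(k)})=\prod_{1\le s<s'\le n}(b_{s'}-b_s)$ as recalled in the proof of Theorem~\ref{thm:border}. For $\det M$ I would use that $c_j(t)\in\mathrm{span}\{1,t,\dots,t^{k-j}\}$: column $j$ of $M$ is then a linear combination of the columns $a_r^{\,k-j},a_r^{\,k-j-1},\dots,a_r^{\,0}$ of $V_{\mathbf a}^{(k)}$, i.e.\ of columns $j,j+1,\dots,k$. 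Hence $M=V_{\mathbf a}^{(k)}U$, where $U=\bigl[\alpha_{k-m+j}\binom{k-m+j}{j}\bigr]_{0\le m,j\le k}$ is lower triangular (its $(m,j)$ entry vanishes when $m<j$) with diagonal entries $U_{jj}=\alpha_k\binom{k}{j}$; consequently $\det U=\prod_{j=0}^{k}\alpha_k\binom{k}{j}=\alpha_k^{\,n}\prod_{j=0}^{k}\binom{k}{j}$.

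Assembling the pieces, $\det\bigl(A_{\mathbf a,\mathbf b}(f(x+y))\bigr)=\det(V_{\mathbf a}^{(k)})\,\det(U)\,\det(W_{\mathbf b}^{(k)})$, and substituting $\det(V_{\mathbf a}^{(k)})=(-1)^{\binom{k+1}{2}}\prod_{1\le r<r'\le n}(a_{r'}-a_r)$ together with $\binom{k+1}{2}=\binom{n}{2}$ produces exactly the claimed formula. The one point requiring care is the bookkeeping for $U$: one must check that $c_j$ expands against the \emph{reversed} monomial columns of $V_{\mathbf a}^{(k)}$, which is what makes $U$ triangular with the stated diagonal; the rest is a routine multiplication of determinants. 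As an alternative route, one may instead write $f(x+y)=\sum_{i=0}^{k}\alpha_i(x+y)^i$, apply~\eqref{eq:factor} to each homogeneous summand $(x+y)^i$ (padding its Vandermonde factors to width $k+1$), and add, which reproduces the same core matrix $U$.
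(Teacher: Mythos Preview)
Your proof is correct and reaches the same three-factor decomposition $A_{\mathbf a,\mathbf b}(f(x+y))=V_{\mathbf a}^{(k)}\cdot(\text{core})\cdot(W_{\mathbf b}^{(k)})^{T}$ that the paper obtains; indeed your core matrix $U=[\alpha_{k-m+j}\binom{k-m+j}{j}]$ is exactly the paper's $RC_f$, where $C_f=[\alpha_{i+j}\binom{i+j}{i}]$ is a Hankel-type coefficient matrix and $R$ is the anti-diagonal reversal. The genuine difference lies in how the determinant of this core is computed. The paper keeps the symmetric form $C_f$, invokes a ``standard Pascal-basis argument'' to obtain $\det C_f=\alpha_k^{n}(-1)^{\binom{n}{2}}\prod_i\binom{k}{i}$, and then multiplies by $\det R=(-1)^{\binom{n}{2}}$. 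You instead expand asymmetrically in powers of $b_s$ first, which makes the triangularity of $U$ transparent (since $\deg c_j\le k-j$ forces $U_{m,j}=0$ for $m<j$), so $\det U=\prod_j \alpha_k\binom{k}{j}$ drops out immediately. Your route is more elementary and fully self-contained, avoiding the unproved Pascal-basis step; the paper's route has the minor advantage of displaying the $x\leftrightarrow y$ symmetry of the core explicitly, but nothing downstream actually requires that.
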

 
 \begin{proof}
 	Let $V_{\mathbf a}^{(k)}=[ a_r^{ k-j} ]_{1\le r\le n, 0\le j\le k}$ and
 	$W_{\mathbf b}^{(k)}=[ b_s^{ j} ]_{1\le s\le n, 0\le j\le k}$.
 	Write also the “ascending–power” evaluation matrices
 	\[
 	E_{\mathbf a}=[ a_r^{ i-1} ]_{1\le r,i\le n},\qquad
 	E_{\mathbf b}=[ b_s^{ j-1} ]_{1\le s,j\le n}.
 	\]
 	Let $R$ be the $(k+1)\times(k+1)$ column-reversal permutation matrix (ones on the anti-diagonal, zeros elsewhere). Then
 	\[
 	E_{\mathbf a} = V_{\mathbf a}^{(k)} R,
 	\qquad
 	E_{\mathbf b} = W_{\mathbf b}^{(k)}.
 	\]
 	
 	By definition of $f(t)$ and the binomial theorem, we have 
 	\[
 	f(a_r+b_s)=\sum_{m=0}^k \alpha_m(a_r+b_s)^m 
 	= \sum_{m=0}^k \alpha_m \sum_{i=0}^m \binom{m}{i} a_r^i b_s^{ m-i}.
 	\]
 	Set $j=m-i$, so $i+j=m$. The coefficient of $a_r^i b_s^j$ is $\alpha_{i+j}\binom{i+j}{i}$, so
 	\[
 	f(a_r+b_s)=\sum_{i=0}^{n-1}\sum_{j=0}^{n-1} \alpha_{i+j}\binom{i+j}{i} a_r^i b_s^j.
 	\]
 	Restricting to $i,j\le n-1$ ensures all terms fit into $n\times n$ matrices (and if $n=k+1$ all needed terms appear).
 	
 	Now write this in matrix form: the $i$th power $a_r^i$ gives the $(r,i+1)$ entry of $E_{\mathbf a}$, similarly $b_s^j$ gives $(s,j+1)$ entry of $E_{\mathbf b}$, and the coefficient in the middle is exactly 
 	\[
 	(C_f)_{i+1,j+1}=\alpha_{i+j}\binom{i+j}{i}.
 	\]
 	The desired factorization 
 	\[
 	A_{\mathbf a,\mathbf b}(f(x+y))
 	= E_{\mathbf a} C_f E_{\mathbf b}^T 
 	=
 	V_{\mathbf a}^{(k)}  (R C_f)  \bigl(W_{\mathbf b}^{(k)}\bigr)^T
 	\]  is concluded. 
 	It follows that 
 	\begin{align} \label{eq:lin}
 		\det (A_{\mathbf a,\mathbf b}(f(x+y)))
 		=
 		\det\bigl(V_{\mathbf a}^{(k)}\bigr) \det(R C_f) \det\bigl(W_{\mathbf b}^{(k)}\bigr).
 	\end{align}
 	
 	We now compute each factor: Since $V_{\mathbf a}^{(k)}$ lists descending powers, reversing columns to the standard Vandermonde order needs $\binom{n}{2}$ adjacent swaps:
 	\[
 	\det(V_{\mathbf a}^{(k)}) = (-1)^{\binom{n}{2}} \prod_{1\le r<r'\le n}(a_{r'}-a_r).
 	\]
 	This is the usual Vandermonde determinant:
 	\[
 	\det(W_{\mathbf b}^{(k)}) = \prod_{1\le s<s'\le n}(b_{s'}-b_s).
 	\]
 	We have $\det(R)=(-1)^{\binom{n}{2}}$. A standard Pascal-basis argument  gives
 	$
 	\det(C_f)=\alpha_k^{ n} (-1)^{\binom{n}{2}}
 	\prod\limits_{i=0}^k \binom{k}{i}.
 	$
 	Thus $\det(R C_f) = \det(R)\det( C_f)=\alpha_k^{ n}  
 	\prod\limits_{i=0}^k \binom{k}{i}$.

 	Multiplying the three determinants,
 	\begin{align*}
 		\det (A_{\mathbf a,\mathbf b}(f(x+y)))
 		&=
 		\Bigl[(-1)^{\binom{n}{2}}\prod_{r<r'}(a_{r'}-a_r)\Bigr] 
 		\Bigl[\alpha_k^{ n}  
 		\prod\limits_{i=0}^k \binom{k}{i}\Bigr] 
 		\Bigl[\prod_{s<s'}(b_{s'}-b_s)\Bigr] \\ &
 		=
 		\alpha_k^{ n} (-1)^{\binom{n}{2}}
 		\prod\limits_{i=0}^k \binom{k}{i} \prod_{1\leq r<r' \leq n}(a_{r'}-a_r) \prod_{1\leq s<s' \leq n}(b_{s'}-b_s),\end{align*}
 	as desired. 
 \end{proof}
 
 From Proposition \ref{prop:resultant-form},  for $f(t)=\sum\limits_{i=0}^k \alpha_i t^i$ with leading coefficient $\alpha_k\neq 0$ and $n=k+1$,   it is not difficult to see that $\det (A_{\mathbf a,\mathbf b}(f(x+y)))$ is independent from $\alpha_i$ for all $i\in\{0,1,\dots,k-1\}$.  Hence, we have the following corollary.

 \begin{corollary}
 	Let 
 	$n=k+1$  be integers and let $\mathbf a=(a_1,\dots,a_n)$  and  $\mathbf b=(b_1,\dots,b_n)$ be in $\mathbb{C}^n$.
 	If $f(t)=\sum\limits_{i=0}^k \alpha_i t^i$ with leading coefficient $\alpha_k\neq 0$ and $g(x)=\alpha_k t^k$, then \[\det (A_{\mathbf a,\mathbf b}(f(x+y)))=\det (A_{\mathbf a,\mathbf b}(g(x+y)))\]
 \end{corollary}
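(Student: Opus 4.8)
The plan is to invoke Proposition~\ref{prop:resultant-form} twice --- once for $f$ and once for $g$ --- and observe that the two resulting closed forms are literally the same expression. The essential point, already flagged in the paragraph preceding the statement, is that the value furnished by Proposition~\ref{prop:resultant-form} depends on the coefficients of the univariate polynomial only through its leading coefficient $\alpha_k$ (together with the degree $k$, the size $n=k+1$, and the evaluation vectors $\mathbf a,\mathbf b$); the lower coefficients $\alpha_0,\dots,\alpha_{k-1}$ simply do not occur.

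Concretely, I would first apply Proposition~\ref{prop:resultant-form} to $f(t)=\sum_{i=0}^{k}\alpha_i t^i$ to get
\[
\det\left(A_{\mathbf a,\mathbf b}(f(x+y))\right)
= \alpha_k^{\,n}(-1)^{\binom{n}{2}}\prod_{i=0}^{k}\binom{k}{i}\prod_{1\le r<r'\le n}(a_{r'}-a_r)\prod_{1\le s<s'\le n}(b_{s'}-b_s).
\]
Next I would note that $g(t)=\alpha_k t^k$ is itself of the admissible form $\sum_{i=0}^{k}\beta_i t^i$ with $\beta_k=\alpha_k\neq 0$ and $\beta_0=\dots=\beta_{k-1}=0$, so Proposition~\ref{prop:resultant-form} applies to $g$ with the same degree $k$, the same size $n=k+1$, and the same leading coefficient $\alpha_k$. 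It therefore yields
\[
\det\left(A_{\mathbf a,\mathbf b}(g(x+y))\right)
= \alpha_k^{\,n}(-1)^{\binom{n}{2}}\prod_{i=0}^{k}\binom{k}{i}\prod_{1\le r<r'\le n}(a_{r'}-a_r)\prod_{1\le s<s'\le n}(b_{s'}-b_s),
\]
which is identical to the expression above, and comparing the two displays gives the claimed equality.

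There is essentially no obstacle: the only thing warranting a moment's care is checking that a pure monomial of degree exactly $k$ with nonzero coefficient is a legitimate input to Proposition~\ref{prop:resultant-form}, which is immediate from its hypotheses. (Alternatively, one could argue directly from the factorization $A_{\mathbf a,\mathbf b}(f(x+y))=V_{\mathbf a}^{(k)}(RC_f)(W_{\mathbf b}^{(k)})^T$ built in the proof of that proposition, observing that the antidiagonal entries of $C_f$ are precisely $\alpha_k\binom{k}{i}$ while the entries above the antidiagonal involve only the lower $\alpha_i$, so $\det(C_f)$ depends on $\alpha_k$ alone; but the two-line route via Proposition~\ref{prop:resultant-form} is the cleanest.)
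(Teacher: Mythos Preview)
Your proposal is correct and matches the paper's own justification: the corollary is stated immediately after the observation that the formula in Proposition~\ref{prop:resultant-form} depends on the coefficients of $f$ only through $\alpha_k$, and your argument simply makes that observation explicit by applying the proposition to both $f$ and $g$. There is nothing to add.
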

 
 \begin{remark} We note that  $p(x,y)=f(x+y)$ is homogeneous if and only if $f(t)= ct^k$  for some integer $k\geq 0$ and scalar $c$.  In this case,  the result in Proposition \ref{prop:resultant-form} agrees with the ones in Corollary~\ref{thm:sum-powers} and Remark~\ref{rem:2.1}.
 	
 \end{remark}

 \subsection{Equivariance under Linear Changes of Variables}\label{subsec:equivariance}

 We next study how $A_{\mathbf a,\mathbf b}(p(x,y))$ transforms when we apply a linear
 change of variables $(x,y)\mapsto(x',y')=B\cdot(x,y)$ with
 $B=\begin{psmallmatrix}\alpha&\beta\\ \gamma&\delta\end{psmallmatrix}\in\mathrm{GL}_2(\mathbb C)$.
 For a homogeneous $p(x,y)$ of degree $k$ put $p_B(x,y):=p(x',y')$.
 For the \emph{sum-form} $p(x,y)=f(x+y)$ we obtain a clean and explicit rule.
 
 \begin{proposition}\label{prop:GL2-sum}
 	Let $f(x)$ be a polynomial of degree $k$ and let   $p(x,y)=f(x+y)$. Let $n\le k+1$ and let 
 	\[
 	c:=\alpha+\gamma \text{~ and ~} d:=\beta+\delta.
 	\] Let   $\mathbf a=(a_1,\dots,a_n)$ and  $\mathbf b=(b_1,\dots,b_n)$ be vectors in $\mathbb{C}^n$.
 	Then 
 	\[
 	A_{\mathbf a,\mathbf b}(p_{B}(x,y)) = \left[f(ca_r+db_s)\right]_{r,s=1}^n
 	= A_{c\mathbf a,\ d\mathbf b}(f(x+y)).
 	\]
 	In particular, if $n=k+1$, then 
 	\[
 	\det (A_{\mathbf a,\mathbf b}(p_{B}(x,y)))
 	=  c^{\binom{n}{2}}  d^{\binom{n}{2}}  \det (A_{\mathbf a,\mathbf b}(p(x,y)))
 	.
 	\] 
 \end{proposition}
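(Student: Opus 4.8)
The plan is to treat the two assertions separately: the matrix identity is a pure substitution, and the determinant scaling then follows by inserting that identity into the closed-form formula of Proposition~\ref{prop:resultant-form}.

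For the matrix identity I would begin from the change of variables $(x,y)\mapsto(x',y')=B\cdot(x,y)$, that is $x'=\alpha x+\beta y$ and $y'=\gamma x+\delta y$. Adding the two coordinates gives $x'+y'=(\alpha+\gamma)x+(\beta+\delta)y=cx+dy$, so that, since $p(x,y)=f(x+y)$,
\[
p_B(x,y)=p(x',y')=f(x'+y')=f(cx+dy).
\]
Evaluating at $(x,y)=(a_r,b_s)$ produces $p_B(a_r,b_s)=f(ca_r+db_s)$ for every $r,s$, which is exactly the $(r,s)$ entry of $A_{c\mathbf a,\,d\mathbf b}(f(x+y))$; this yields $A_{\mathbf a,\mathbf b}(p_B(x,y))=\left[f(ca_r+db_s)\right]_{r,s=1}^n=A_{c\mathbf a,\,d\mathbf b}(f(x+y))$. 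I would stress that this argument uses only the additivity inside the argument of $f$, never homogeneity, so it holds for every $n$, in particular for all $n\le k+1$.

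For the determinant claim I would specialize to $n=k+1$. Since $\deg f=k$, the leading coefficient of $f$ is nonzero, so Proposition~\ref{prop:resultant-form} is applicable verbatim to the scaled evaluation vectors $c\mathbf a=(ca_1,\dots,ca_n)$ and $d\mathbf b=(db_1,\dots,db_n)$. In the resulting formula the only dependence on $c$ and $d$ is through the two Vandermonde products $\prod_{1\le r<r'\le n}(ca_{r'}-ca_r)$ and $\prod_{1\le s<s'\le n}(db_{s'}-db_s)$; each ranges over exactly $\binom{n}{2}$ index pairs, so these equal $c^{\binom{n}{2}}\prod_{r<r'}(a_{r'}-a_r)$ and $d^{\binom{n}{2}}\prod_{s<s'}(b_{s'}-b_s)$ respectively. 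Factoring out these scalars and comparing with the same proposition applied to $(\mathbf a,\mathbf b)$ — and recalling $\det(A_{\mathbf a,\mathbf b}(p(x,y)))=\det(A_{\mathbf a,\mathbf b}(f(x+y)))$ by the definition of $p$ — gives $\det(A_{\mathbf a,\mathbf b}(p_B(x,y)))=c^{\binom{n}{2}}d^{\binom{n}{2}}\det(A_{\mathbf a,\mathbf b}(p(x,y)))$, as required.

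There is no genuinely hard step here: the proof is a substitution followed by an appeal to an already-proved closed form. The only points needing care are (i) verifying that the hypothesis $\alpha_k\ne 0$ of Proposition~\ref{prop:resultant-form} is inherited from $\deg f=k$, and (ii) getting the exponent $\binom{n}{2}$ right when pulling $c$ and $d$ out of the Vandermonde products, namely the number of unordered pairs of row indices. It is also worth remarking that $B\in\mathrm{GL}_2(\mathbb C)$ plays no role in the stated conclusions; if $c=0$ or $d=0$ the transformed matrix becomes rank-deficient and both sides of the determinant identity vanish, so the formula remains consistent.
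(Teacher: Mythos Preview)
Your proof is correct and follows essentially the same route as the paper: the matrix identity is obtained by the identical substitution $x'+y'=cx+dy$, and the determinant scaling at $n=k+1$ is deduced from Proposition~\ref{prop:resultant-form} by extracting $c^{\binom{n}{2}}$ and $d^{\binom{n}{2}}$ from the Vandermonde factors. The only cosmetic difference is that the paper pulls these scalars out of the intermediate factorization \eqref{eq:lin} via $\det(V_{c\mathbf a}^{(k)})=c^{\binom{n}{2}}\det(V_{\mathbf a}^{(k)})$ and $\det(W_{d\mathbf b}^{(k)})=d^{\binom{n}{2}}\det(W_{\mathbf b}^{(k)})$, whereas you pull them out of the final closed-form Vandermonde products; the computation is the same.
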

 
 \begin{proof} Since $p_{B}(x,y)=f(x'+y')=f\big((\alpha+\gamma)x+(\beta+\delta)y\big)=f(cx+dy)$, we have \[ (A_{\mathbf a,\mathbf b}(p_{B}(x,y)))_{rs} =f(ca_r+db_s)=(A_{c\mathbf a,\ d\mathbf b}(f(x+y)))_{rs}. \]  Precisely, $A_{\mathbf a,\mathbf b}(p_B (x,y))=A_{c\mathbf a,d\mathbf b}(f(x+y))$. 
 	
 	For the determinant scaling at $n=k+1$, use \eqref{eq:lin} in Proposition \ref{prop:resultant-form},  we have

 	\begin{align*}
 		\det (A_{\mathbf a,\mathbf b}(p_B(x,y))) =\det( A_{c\mathbf a,d\mathbf b}(f(x+y)))
 		=\det\bigl(V_{c\mathbf a}^{(k)}\bigr) \det(R C_f) \det\bigl(W_{d\mathbf b}^{(k)}\bigr).
 	\end{align*}
 	Since \[
 	\det(V_{c\mathbf a}^{(k)}) = c^{\binom{n}{2}} \det(V_{\mathbf a}^{(k)}) 
 	\text{ ~ and ~ }
 	\det(W_{d\mathbf b}^{(k)}) = d^{\binom{n}{2}} \det(W_{\mathbf b}^{(k)}),
 	\]
 	it can be deduced that 
 	\begin{align*}
 		\det (A_{\mathbf a,\mathbf b}(p_B(x,y)))  
 		=c^{\binom{n}{2}}d^{\binom{n}{2}}
 		\det (A_{\mathbf a,\mathbf b}\bigl(f(x+y)\bigr)).
 	\end{align*}
 \end{proof}
 
 Combining Proposition   \ref{prop:resultant-form} and Proposition  \ref{prop:GL2-sum}, an explicit formula for  $\det (A_{\mathbf a,\mathbf b}(f(x+y))$ follows immediately.

 \subsection{Behavior over Finite Fields}
 
 Next, we consider the behavior of $\det (A_{\mathbf a,\mathbf b}(p(x,y)))$ in the case where the vectors  $a_i,b_j$ are chosen randomly from  a finite field $\mathbb{F}_q$.  
 We  focus on the case where $n\leq k+1$ and  give an upper bound for the case where the determinant is identically zero.  This is obtained via an application of the Schwartz–Zippel lemma.

 \begin{lemma} [{\cite[Lemma A.36]{AroraBarak2009}, Schwartz–Zippel}] \label{lem:SZ}
 	Let $\mathbb{F}_q$ be a field of order $q$ and let $p(x_1,\dots,x_n)\in \mathbb{F}_q[x_1,\dots,x_n]$ be a nonzero polynomial of total degree $d\ge 0$. 
 	Let $S\subseteq \mathbb{F}_q$ be a  non-empty set and choose $(r_1,\dots,r_n)\in S^n$ uniformly at random. Then the probability   
 	\[
 	\Pr\bigl[p(r_1,\dots,r_n)=0\bigr]  \le  \frac{d}{|S|}.
 	\] 
 \end{lemma}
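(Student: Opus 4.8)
The plan is to prove the bound by induction on the number of variables $n$, the engine being the elementary fact that a nonzero univariate polynomial of degree $d$ over a field has at most $d$ roots. For the base case $n=1$, a nonzero $p(x_1)\in\mathbb{F}_q[x_1]$ of total degree $d$ has at most $d$ roots in $\mathbb{F}_q$, hence at most $d$ of them lie in $S$; since $r_1$ is uniform on the $|S|$ elements of $S$, this gives $\Pr[p(r_1)=0]\le d/|S|$. (If $d=0$ the polynomial is a nonzero constant, so the probability is $0$; this settles the case $d=0$ for every $n$ at once.)

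For the inductive step, assume the statement for all nonzero polynomials in $n-1$ variables, and let $p\in\mathbb{F}_q[x_1,\dots,x_n]$ be nonzero of total degree $d$. Let $k$ be the largest exponent of $x_n$ occurring in $p$ and write
\[
p(x_1,\dots,x_n)=\sum_{i=0}^{k} x_n^{\,i}\,p_i(x_1,\dots,x_{n-1}),
\]
so that $p_k\ne 0$ by the choice of $k$ and $\deg p_k\le d-k$. I would then condition on the uniform independent values $r_1,\dots,r_{n-1}\in S$ and split into two cases. On the event $p_k(r_1,\dots,r_{n-1})=0$, which by the induction hypothesis applied to the nonzero polynomial $p_k$ in $n-1$ variables has probability at most $(d-k)/|S|$, I simply bound $\Pr[p=0]$ by $1$. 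On the complementary event, $q(x_n):=p(r_1,\dots,r_{n-1},x_n)$ is a univariate polynomial of degree exactly $k$, its leading coefficient being the nonzero scalar $p_k(r_1,\dots,r_{n-1})$, so the base-case reasoning gives $\Pr[q(r_n)=0]\le k/|S|$ over the independent choice of $r_n$. The law of total probability together with a union bound then yields
\[
\Pr[p(r_1,\dots,r_n)=0]\le \frac{d-k}{|S|}+\frac{k}{|S|}=\frac{d}{|S|},
\]
which closes the induction; the degenerate case $k=0$ (where $p$ does not involve $x_n$) reduces directly to the induction hypothesis for $p=p_0$, consistent with the formula.

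There is no genuine obstacle here, as the argument is classical. The only points requiring a little care are the degree bookkeeping, namely that $\deg p_k\le d-k$ (which is exactly what makes the induction hypothesis deliver the $(d-k)/|S|$ term) and that the specialized polynomial $q$ has degree exactly $k$ whenever its leading coefficient is nonzero, together with the clean assembly of the two conditional estimates into a single union bound. Everything else is routine.
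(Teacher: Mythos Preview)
Your argument is the classical induction proof of the Schwartz--Zippel lemma and is correct as written; the degree bookkeeping ($\deg p_k\le d-k$) and the conditioning on whether the leading coefficient specializes to zero are handled properly. Note, however, that the paper does not supply its own proof of this lemma: it is quoted as \cite[Lemma~A.36]{AroraBarak2009} and used as a black box in the proof of Proposition~\ref{prop:FF-nonvanish}, so there is no in-paper argument to compare against. Your proof is precisely the standard one found in the cited reference.
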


 \begin{proposition} \label{prop:FF-nonvanish} Let $\mathbb{F}_q$ be a finite field of order $q$ and let 
 	$n\le k+1$ be integers. 
 	Let  $\alpha_i\in\mathbb{F}_q \setminus \{0\}$ for $0\le i\le k$ and  let 
 	$\boldsymbol{a} =(a_1,\dots,a_n)$ and  $\boldsymbol{b} =(b_1,\dots,b_n) $  be independently and uniformly from $\mathbb{F}_q^n$.
 	If $p(x,y)=\sum\limits_{i=0}^k \alpha_i (x+y)^i$, then the probability    that  $\det (A_{\mathbf a,\mathbf b}(p(x,y)))=0$  is
 	\[
 	\Pr \left(\det (A_{\mathbf a,\mathbf b}(p(x,y)))=0\right) \le \frac{nk}{q}.
 	\]
 \end{proposition}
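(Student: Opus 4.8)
The plan is to apply the Schwartz--Zippel lemma (Lemma~\ref{lem:SZ}) to the determinant $D(\mathbf a,\mathbf b):=\det\bigl(A_{\mathbf a,\mathbf b}(p(x,y))\bigr)$, regarded as an element of $\mathbb{F}_q[a_1,\dots,a_n,b_1,\dots,b_n]$. Two ingredients are needed: an upper bound on $\deg D$ and the fact that $D$ is not the zero polynomial. For the degree, I would note that, since $\alpha_k\neq 0$, each entry $p(a_r,b_s)=\sum_{i=0}^k\alpha_i(a_r+b_s)^i$ has total degree exactly $k$ in the pair $(a_r,b_s)$; in the Leibniz expansion $D=\sum_{\sigma\in S_n}\mathrm{sgn}(\sigma)\prod_{r=1}^n p(a_r,b_{\sigma(r)})$ every summand is a product of $n$ such factors involving pairwise disjoint pairs of variables, hence of total degree $nk$, so $\deg D\le nk$.

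The substantive step is showing $D\not\equiv 0$ in $\mathbb{F}_q[\mathbf a,\mathbf b]$. When $n=k+1$ this is immediate from Proposition~\ref{prop:resultant-form}, which exhibits $D$ as $\alpha_k^{\,n}(-1)^{\binom n2}\bigl(\prod_{i=0}^k\binom ki\bigr)\prod_{r<r'}(a_{r'}-a_r)\prod_{s<s'}(b_{s'}-b_s)$, a nonzero polynomial provided $\alpha_k$ and $\prod_i\binom ki$ are nonzero in $\mathbb{F}_q$. For general $n\le k+1$ I would use the rectangular form of the factorization from the proof of Proposition~\ref{prop:resultant-form}, namely $A_{\mathbf a,\mathbf b}(p(x,y))=V_{\mathbf a}^{(k)}(RC_f)\bigl(W_{\mathbf b}^{(k)}\bigr)^{T}$ with $V_{\mathbf a}^{(k)}$ and $W_{\mathbf b}^{(k)}$ now $n\times(k+1)$ matrices, and apply the Cauchy--Binet formula (Theorem~\ref{thm:CB-general}) twice to obtain $D=\sum_{I,J}\det\bigl(V_{\mathbf a}^{(k)}(:,I)\bigr)\,\det\bigl((RC_f)(I,J)\bigr)\,\det\bigl(W_{\mathbf b}^{(k)}(:,J)\bigr)$, summed over $n$-subsets $I,J\subseteq\{0,\dots,k\}$. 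Each minor $\det\bigl(V_{\mathbf a}^{(k)}(:,I)\bigr)$ is, up to sign, $\prod_{r<r'}(a_{r'}-a_r)$ times a Schur polynomial in $\mathbf a$, and likewise for $\mathbf b$; since such generalized Vandermonde minors are linearly independent for distinct index sets, $D\equiv 0$ if and only if $\rank(C_f)<n$. As $C_f$ is anti-triangular with anti-diagonal entries $\alpha_k\binom k0,\dots,\alpha_k\binom kk$, one gets $\det C_f=(-1)^{\binom{k+1}2}\alpha_k^{\,k+1}\prod_i\binom ki$, so $\rank(C_f)=k+1\ge n$ whenever $\prod_i\binom ki\neq 0$ in $\mathbb{F}_q$, which by Lucas' theorem holds in particular when the characteristic of $\mathbb{F}_q$ exceeds $k$.

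With $D\not\equiv 0$ and $\deg D\le nk$ in hand, Lemma~\ref{lem:SZ} applied with $S=\mathbb{F}_q$ (so $|S|=q$) and $d=\deg D$ yields $\Pr\bigl[\det(A_{\mathbf a,\mathbf b}(p(x,y)))=0\bigr]\le \deg D/q\le nk/q$, as claimed. I expect the nonvanishing step to be the one genuine obstacle: in small characteristic some $\binom ki$ can be divisible by the characteristic, which forces $\rank(C_f)<n$ and hence $D\equiv 0$ for the corresponding $n\le k+1$ (for instance in characteristic $2$ with $k=2$ one has $p(a,b)=(\alpha_0+\alpha_1a+\alpha_2a^2)+(\alpha_1b+\alpha_2b^2)$, a rank-$2$ pattern), so the bound needs, besides $\alpha_i\neq 0$, that $C_f$ have rank at least $n$ over $\mathbb{F}_q$, a condition automatic once the characteristic of $\mathbb{F}_q$ is larger than $k$. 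The remaining steps---the degree count and the invocation of Schwartz--Zippel---are routine.
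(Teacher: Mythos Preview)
Your approach coincides with the paper's: bound $\deg D\le nk$ via the Leibniz expansion, establish $D\not\equiv 0$ through the factorization $A_{\mathbf a,\mathbf b}(p(x,y))=V_{\mathbf a}^{(k)}(RC_f)(W_{\mathbf b}^{(k)})^{T}$, and then apply Schwartz--Zippel with $S=\mathbb{F}_q$.

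You are in fact more careful than the paper on two points, and both are worth recording. First, the paper's proof simply invokes \eqref{eq:lin} for the nonvanishing step, but that identity was derived in Proposition~\ref{prop:resultant-form} only for $n=k+1$; your double Cauchy--Binet expansion on the rectangular factorization is what is actually needed to treat $n<k+1$. Second, and more substantively, you correctly observe that $D\not\equiv 0$ requires $\rank(C_f)\ge n$ over $\mathbb{F}_q$, which in turn needs $\prod_{i=0}^{k}\binom{k}{i}\ne 0$ in $\mathbb{F}_q$. The paper's proof asserts $\det(RC_p)=\alpha_k^{\,n}\prod_i\binom{k}{i}\ne 0$ solely from the hypothesis $\alpha_i\ne 0$, which is not sufficient: your characteristic-$2$, $k=2$ example (where $\binom{2}{1}=0$ and the matrix has rank at most $2$) shows that the stated hypotheses do not guarantee $D\not\equiv 0$. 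Your proposed fix---assuming $\operatorname{char}(\mathbb{F}_q)>k$, or more sharply that $C_f$ has rank $\ge n$ over $\mathbb{F}_q$---is exactly what is needed.
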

 
 \begin{proof} From \eqref{eq:lin}, we have 
 	
 	\begin{align*}  
 		\det (A_{\mathbf a,\mathbf b}(p(x,y)))
 		=
 		\det\bigl(V_{\mathbf a}^{(k)}\bigr) \det(R C_p) \det\bigl(W_{\mathbf b}^{(k)}\bigr) \text{ and }  \det(RC_p)=\alpha_k^{ n}  
 		\prod\limits_{i=0}^k \binom{k}{i}.
 	\end{align*}
 	Since $\alpha_i\ne 0 $ for all $1\leq i \leq n$,   $\det(RC_p)\ne 0$  which implies that  $\det (A_{\mathbf a,\mathbf b}(p(x,y)))$ is a nonzero polynomial.

 	For each  $1\leq r\leq n$ and $1\leq s \leq n$, we note that each entry $p(a_r,b_s)=\sum\limits_{i=0}^k \alpha_i(a_r+b_s)^i$ has degree at most $k$ in the
 	$2n$ variables $a_r$ and $b_s$. Using the Leibniz expansion,   $\det (A_{\mathbf a,\mathbf b}(p(x,y)))$ is a product
 	of $n$ entries and  each depending on a {distinct} pair $(a_r,b_s)$ which implies that   the total
 	degree is at most $nk$. Hence,     $\det (A_{\mathbf a,\mathbf b}(p(x,y)))$ is a nonzero polynomial of total degree at most  $ nk$ in $2n$ independent
 	uniform variables over $\mathbb{F}_q$. By  Lemma \ref{lem:SZ}, it follows that
 	\[
 	\Pr\left(\det (A_{\mathbf a,\mathbf b}(p(x,y)))=0\right)\le  \frac{nk}{q}.
 	\]
 	The proof is completed.
 \end{proof}

 \section{Conclusion and Remarks}\label{sec4} 
 Using the simple yet powerful Vandermonde factorization  
 $
 A_{\mathbf a,\mathbf b}(p(x,y))=V_{\mathbf a}^{(k)} D_\alpha (W_{\mathbf b}^{(k)})^T,
 $
 we have developed a unified framework for understanding determinants of polynomial evaluation matrices associated with homogeneous bivariate polynomials. Our analysis covers the full range of matrix sizes, from \(n\le k+1\) to the square borderline case \(n=k+1\) and beyond, where rank constraints force the determinant to be zero for \(n\ge k+2\).  
 
 In the square case \(n=k+1\), we obtained a closed-form expression for \(\det (A_{\mathbf a,\mathbf b}(p(x,y)))\) in terms of Vandermonde products and coefficient factors, recovering and extending classical formulas for matrices such as \([ (a_i+b_j)^k]\). For \(n\le k+1\), applying the Cauchy--Binet formula together with an elementary evaluation of generalized Vandermonde minors via complete homogeneous polynomials \(H_m\) yielded a symmetric expansion for the determinant, leading to immediate rank bounds and vanishing criteria.     
 
 Several applications illustrate the versatility of our determinant factorization.
 We derived a closed-form formula for the square sum-form case  where
 $p(x,y)=f(x+y)$ with 
 $n=k+1$.
 Beyond this special case, we established an explicit equivariance law under linear changes of variables for sum-form polynomials, and proved probabilistic non-vanishing bounds over finite fields via the Schwartz–Zippel lemma. 
 
 More generally, it would be of interest to analyze determinants of polynomial evaluation matrices associated with non-homogeneous bivariate polynomials.
 
 \section*{Acknowledgments}
 The authors would like to thank Natcha Karnjano 
 and Kullanut Ueaumakul for useful  discussion.  S. Jitman  was funded by the National Research Council of Thailand and Silpakorn University  under Research Grant  N42A650381.


\begin{thebibliography}{99}
 	
 	\bibitem{AroraBarak2009}
 	\href{https://doi.org/10.1017/CBO9780511804090}{S.~Arora and B.~Barak,
 		\emph{Computational Complexity: A Modern Approach},
 		Cambridge University Press, 2012.} 
 	
 	\bibitem{BjoPer1970}
 	\href{https://doi.org/10.2307/2004623}{{\AA}.~Bj\"orck and V.~Pereyra,
 		Solution of Vandermonde systems of equations,
 		\emph{Math. Comput.} \textbf{24} (1970) 893--903.} 
 	
 	\bibitem{For2010}
 	\href{https://www.jstor.org/stable/j.ctt7t5vq}{P.~J. Forrester,
 		\emph{Log-Gases and Random Matrices},
 		London Mathematical Society Monographs, vol.~34,
 		Princeton University Press, Princeton, NJ, 2010.}
 	
 	
 	\bibitem{Johnston2021}
 	\href{https://www.springer.com/us/book/9783030528102}{N.~Johnston,
 		\emph{Introduction to Linear and Matrix Algebra},
 		Undergraduate Texts in Mathematics,
 		Springer, 2021.}
 	
 	
 	\bibitem{Macdonald1995}
 	\href{https://doi.org/10.1093/oso/9780198534891.001.0001}{I.~G. Macdonald,
 		\emph{Symmetric Functions and Hall Polynomials}, 2nd ed.,
 		Oxford Mathematical Monographs,
 		Oxford Univ. Press, 1995.}
 	
 	
 	\bibitem{MacSlo1977}
 	\href{https://www.sciencedirect.com/bookseries/north-holland-mathematical-library/vol/16}{F.~J. MacWilliams and N.~J.~A. Sloane,
 		\emph{The Theory of Error-Correcting Codes},
 		North-Holland Mathematical Library, vol.~16,
 		North-Holland, Amsterdam, 1977.}
 	
 	
 	\bibitem{PS2022}
 	\href{https://ph02.tci-thaijo.org/index.php/MJMATh/article/view/245236/167810}{T.~Palakawong and D.~Samart,
 		On determinants of matrices generated from values of polynomials,
 		\emph{Math. J. by The Mathematical Association of Thailand Under The Patronage of His Majesty The King} \textbf{67} (2022)  1--12.}
 	
 	\bibitem{YS2012} \href{https://doi.org/10.4169/math.mag.85.2.126}{A.~L. Yandl and C.~Swenson,
 		A class of matrices with zero determinant,
 		\emph{Math. Mag.} \textbf{85} (2012) 126--130.}
 	
 \end{thebibliography}
\end{document}